\numberwithin{equation}{section}
\newtheorem*{acknowledge}{Acknowledgement}
\def\EE{\mathbb E}
\def\cK{\mathcal K}
\def\cL{\mathcal S}
\def\cT{\mathcal T}
\def\cM{\mathcal M} 
\def\cL{\mathcal L}
\def\cT{\mathcal T}
\def\({\left(}
\def\){\right)}  
\def\<{\langle}
\def\>{\rangle}
\let\epsilon\varepsilon
\def\eps*{\epsilon^*}
\let\log\log
\def\({\left(}
\def\){\right)}
\def\:{\colon}
\def\[{\left[}
\def\]{\right]}
\def\cF{\mathcal F}
\def\Rodl{R\"odl}
\def\Rucinski{Ruci\'{n}ski}
\newtheorem{theorem}{Theorem} 
\newtheorem{corollary} [theorem]{Corollary}
\newtheorem{definition} [theorem]{Definition}
\newtheorem{problem} [theorem]{Problem}
\newtheorem{remark} [theorem]{Remark}
\newtheorem{lemma} [theorem]{Lemma}
\title{Universality of random graphs for graphs of maximum degree two}
\author[J.~H.~Kim]{Jeong Han Kim}
\address{School of Computational Sciences \\ Korea Institute for Advanced Study (KIAS) \\ Seoul, South Korea
{\rm(J.~H.~Kim)}} 
\email{jhkim@kias.re.kr}
\thanks{The first author was supported by the National Research Foundation of Korea (NRF) Grant funded by the Korean Government (MSIP) (NRF-2012R1A2A2A01018585) and KIAS internal Research Fund CG046001.}
\author[S.~J.~Lee]{Sang June Lee}
\address{Department of Mathematical Sciences \\  Korea Advanced Institute of Science and Technology (KAIST) \\ Daejeon, South Korea
{\rm(S.~J.~Lee)}} 
\email{sjlee242@gmail.com}
\thanks{The second author was supported by the National Research Foundation of Korea (NRF) Grant funded by the Korea Government (MSIP)(No. 2013042157). This work was partially done while the second author was visiting Korea Institute for Advanced Study (KIAS)}
\date{October 22, 2013}
\begin{document}

\shortdate
\settimeformat{ampmtime}

\begin{abstract}
For a family $\cF$ of graphs, a graph $G$ is called \emph{$\cF$-universal} if $G$ contains every graph in $\cF$ as a subgraph. Let $\cF_n(d)$ be the family of all graphs on $n$ vertices with maximum degree at most $d$. Dellamonica, Kohayakawa, \Rodl~and \Rucinski~\cite{Dellamonica13:universal} showed that, for $d\geq 3$, the random graph $G(n,p)$ is  $\cF_n(d)$-universal with high probability provided  $p\geq C\big(\frac{\log n}{n}\big)^{1/d}$ for a sufficiently large constant $C=C(d)$.
 In this paper we prove the missing part of the result, that is, the random graph $G(n,p)$ is  $\cF_n(2)$-universal with high probability provided 
 $p\geq C\big(\frac{\log n}{n}\big)^{1/2}$ for a sufficiently large constant~$C$.
\end{abstract}

\maketitle

\section{Introduction}

For a positive integer $n$ and a real number $p$ in the range $0\leq p\leq 1$, the random graph $G(n,p)$ on a set $V$ of $n$ elements may be
 obtained from the complete graph on $V$ by choosing each edge with probability $p$, independently of all other edges.

 After the random graph $G(n,p)$ was first introduced by Erd\H{o}s~\cite{Erdos1947} in 1947, the theory of the random graph has become an active area of research. One of the most interesting problems is the containment problem, in which one tries to obtain conditions on $p$ for the property that $G(n,p)$ contains a given graph $H$ as a subgraph with high probability. For example, when $n$ is even and the given graph $H$ is a perfect matching on $V$, then it is easy to see that $np-\log n\rightarrow \infty$ is a necessary condition, as there is an isolated vertex with substantial probability if $np-\log n$ is bounded. 
  Erd\H{o}s and R\'enyi~\cite{Erdos1966} showed the condition is also sufficient.
 In the case that $H$ is a Hamiltonian cycle, 
  Koml\'os and Szemer\'edi~\cite{Komlos1983} and Korshunov~\cite{Korshunov1977}
discovered that the easy necessary condition $np-\log n-\log\log n\rightarrow \infty$ is also sufficient. More generally, if $H$ is a factor of a strictly balanced graph, including a triangle, a cycle or a complete graph, Johansson, Kahn and Vu~\cite{Johansson2008} determined a necessary and sufficient condition for the containment problem with respect to $H$. For more information, the reader is referred to Bollob\'as~\cite{Bollobas2001} and Janson, {\L}uczak and \Rucinski~\cite{Janson00} and the references therein.

 One may also consider a family $\cF$ of graphs rather than a single graph $H$. 
For a family $\cF$ of graphs, a graph $G$ is called \emph{$\cF$-universal} if $G$ contains every graph in $\cF$ as a subgraph. 
There is extensive research on $\cF$-universal graphs 
when $\cF$ are families of trees~\cite{Chung1978, Chung1978-2}, spanning trees~\cite{  Bhatt1989, Chung1979, Chung1983, Friedman1987}, planar graphs of bounded degree~\cite{Bhatt1989}, graphs of bounded size~\cite{Babai1982, Rodl1981}, graphs of bounded degree~\cite{Alon2007-2, Alon08, Alon2007-3, Alon01, Capalbo1999}, and spanning graphs of bounded degree~\cite{Alon2002, Johannsen2013}, etc.

Since an $\cF$-universal graph $G$ must have the maximum degree greater than or equal to the maximum degrees of all graphs in $\cF$,  a family $\cF$ of graphs of bounded degree may be considered. For example, one may consider the family $\cT_{n}(d)$ of all trees on $n$ vertices with maximum degree at most $d$. Bhatt, Chung, Leighton and Rosenberg~\cite{Bhatt1989} showed that
 there is a $\cT_n(d)$-universal graph on $n$ vertices with maximum degree depending only on $d$.
For $d\geq \log n$, Johannsen, Krivelevich and Samotij~\cite{Johannsen2013} proved that there is a positive constant $c$ such that $G(n,p)$ with $p\geq cdn^{-1/3}\log n$ is asymptotically almost surely (a.a.s.) $\cT_n(d)$-universal; where, in general, a property holds \emph{asymptotically almost surely}, or simply \emph{a.a.s.}, if it holds with probability tending to $1$ as $n\rightarrow \infty$.  In particular, we have that $G(n,p)$ with $p\geq cn^{-1/3}(\log n)^2$ is a.a.s. $\cT_n(\log n)$-universal, and hence, $\cT_n(d)$-universal for a constant $d$.
For the family  $\cT_{(1-\epsilon)n}(d)$ of all trees on $(1-\epsilon)n$ vertices with maximum degree at most $d$, Alon, Krivelevich and Sudakov~\cite{Alon2007} showed that for every positive constant $\epsilon$ and positive integer $d$, there exists a constant $c=c(\epsilon, d)$ such that $G(n,p)$ with $p=c/n$ is a.a.s. $\cT_{(1-\epsilon)n}(d)$-universal. For more related results,~\cite{Dellamonica2008_2, Balogh2010, Balogh2011} may be referred.

In this paper, we consider the family $\cF_n(d)$ of all graphs on $n$ vertices with maximum degree at most $d$. For an even $n$ and $d=1$,  the $\cF_n(1)$-universality is equivalent to the containment problem for a perfect matching.  Provided that $n$ is divisible by $d+1$, one may easily see that $p\geq  \Big(\frac{(\log n)^{1/d}}{n}\Big)^{2/(d+1)}$ is a necessary condition for $G(n,p)$ being a.a.s. $\cF_n(d)$-universal, since $\cF_n(d)$ contains a $K_{d+1}$-factor, and hence,
every vertex must be contained in a copy of $K_{d+1}$.
On the other hand, Dellamonica, Kohayakawa, \Rodl~and \Rucinski~\cite{Dellamonica2008, Dellamonica2012} 
proved that $p\geq C\big(\frac{\log n}{n}\big)^{1/(2d)}$ is sufficient, for a sufficiently large constant $C$.

\begin{theorem}[\cite{Dellamonica2008, Dellamonica2012}]

For every integer $d\geq 2$, there exists a positive constant $C=C(d)$ such that if $p\geq C\Big(\frac{(\log n)^2}{n}\Big)^{1/(2d)}$, then the random graph $G(n,p)$ is a.a.s. $\cF_n(d)$-universal.
 \end{theorem}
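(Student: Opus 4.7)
The plan is to split the proof into a probabilistic step --- verifying \aas\ pseudo-randomness properties of $G=G(n,p)$ --- and a deterministic step --- showing that every graph with these properties is $\cF_n(d)$-universal. The pseudo-randomness I would seek includes a discrepancy property $\DISC(\eta,p)$ (every two vertex sets $A,B$ with $|A|,|B|\geq\eta n$ span $(1\pm\eta)p|A||B|$ edges), together with a codegree/extension lemma: for every $k\leq d$ and every $k$-tuple $(u_1,\dots,u_k)$ outside a small exceptional set, one has $|N(u_1)\cap\dots\cap N(u_k)|=(1\pm\eta)np^k$. These assertions rest on the hypothesis $np^{2d}\gg(\log n)^2$, which is exactly what the bound $p\geq C\big((\log n)^2/n\big)^{1/(2d)}$ provides, and which makes Chernoff estimates beat a union bound over all relevant configurations.

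\textbf{Embedding.} Given $H\in\cF_n(d)$, I would partition $V(H)=M\cup R$, where $R$ is an independent set of size at least $n/(d+1)$; such $R$ exists because $\Delta(H)\leq d$. Every vertex of $R$ is attached to $M$ by at most $d$ edges. I would first embed $M$ greedily along an ordering $v_1,\dots,v_{|M|}$ in which each $v_i$ has at most $d$ earlier neighbours: at step $i$, pick $\phi(v_i)$ uniformly at random from the intersection of $G$-neighbourhoods of $\phi(u)$ over earlier neighbours $u$ of $v_i$, minus vertices used so far and minus the small set of atypical-codegree vertices identified in the first step. The codegree lemma guarantees this feasible set has size $(1-o(1))np^{k}$ with $k\leq d$ at every step. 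Once $\phi$ is defined on $M$, each $v\in R$ has a candidate set $\cH(v)=\bigcap_{u\sim_H v}N_G(\phi(u))\subseteq V(G)\setminus\phi(M)$. Because $R$ is independent, completing the embedding reduces to finding a perfect matching in the auxiliary bipartite graph on $R\cup(V(G)\setminus\phi(M))$ whose edges record the sets $\cH(v)$, which I would produce via Hall's theorem.

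\textbf{Main obstacle.} The hard part is verifying Hall's condition for the $R$-matching, i.e.\ showing there is no deficient set $T\subseteq R$ with $|\bigcup_{v\in T}\cH(v)|<|T|$. One reduces this to bounding, for every $U\subseteq V(G)\setminus\phi(M)$ of size $|T|-1$, the number of $v\in R$ that could satisfy $\cH(v)\subseteq U$, since this forces an atypical codegree pattern over a prescribed $\leq d$-tuple of images. The associated union bound over $T$ and $U$ converges precisely when $np^{2d}\gg(\log n)^2$, which explains the $1/(2d)$ exponent. The principal technical difficulty I foresee is disentangling the randomness of $G$ from the randomness of the greedy embedding $\phi$ of $M$: these are not independent, and naive conditioning destroys the pseudo-randomness used in the matching step. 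The cleanest way around this, I believe, is a two-round exposure --- writing $G(n,p)$ as the union of independent copies $G(n,p_1)$ and $G(n,p_2)$ with $(1-p)=(1-p_1)(1-p_2)$ and $p_1\approx p_2\approx p/2$ --- using the first round to establish all pseudo-random properties and drive the greedy phase, and reserving the fresh randomness of the second round to verify Hall's condition for the final matching.
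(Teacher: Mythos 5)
Your outline shares the backbone of the DKRR proof (pseudo-random codegree and discrepancy properties verified by Chernoff plus a union bound, a vertex-class decomposition of $H$, and Hall's theorem to finish), but the greedy phase is a genuine gap. With $R$ a maximum independent set, $|M|$ can be as large as $dn/(d+1)=\Theta(n)$. At a step where $v_i$ has $k\geq 1$ already-embedded neighbours, the candidate set $\bigcap_u N_G(\phi(u))$ has size roughly $np^k$; for the stated $p$ this is a polynomially small fraction of $n$ (for example $np^d\approx n^{1/2}\log n$). Hence after $\Theta(n)$ greedy steps the set of already-used images is far larger than any candidate set, and the assertion that the feasible set still has size $(1-o(1))np^k$ cannot be right as written. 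To salvage the greedy phase one must prove that the images chosen so far spread out evenly across \emph{every} codegree set $\bigcap_u N_G(\phi(u))$ --- a separate uniform-spreading statement requiring a union bound over all $\Theta(n)$-size subsets and all relevant $k$-tuples --- which is not a consequence of the codegree lemma, and the two-round exposure you propose does not repair it: it makes the $R$-matching independent of $\phi(M)$, but it does nothing to make the greedy phase self-consistent with respect to its own history.

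Both the cited DKRR papers and the present one avoid this by never embedding one vertex at a time. Here $V(H)$ is partitioned into a bounded number of classes that are $2$- or $3$-independent in $H$ (Lemma~\ref{clm:partition of X}, via Hajnal--Szemer\'edi), and each class is embedded in a single stroke as a perfect matching in an auxiliary bipartite graph, with Hall's condition supplied by properties (P1)--(P2). Because every class is independent in $H$, each matching stage has no intra-class constraints and retains the entire unused remainder of $V(G)$ as its pool, so the candidate-set-exhaustion problem never arises. Replacing your greedy $M$-phase with such a batched matching scheme (a partition into $O(d)$ near-independent classes, each embedded by Hall) would close the gap and bring your proposal in line with the actual argument.
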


Recently the above result was notably improved for $d\geq 3$.

 \begin{theorem}[\cite{Dellamonica13:universal, Dellamonica12:universal}]\label{thm:universality}  For every integer $d\geq 3$, there exists a positive constant $C=C(d)$ such that if $p\geq C\big(\frac{\log n}{n}\big)^{1/d}$, then the random graph $G(n,p)$ is a.a.s. $\cF_n(d)$-universal.
 \end{theorem}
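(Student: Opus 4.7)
The plan is to embed any $H \in \cF_n(d)$ into $G \sim G(n,p)$ by a carefully ordered greedy embedding that exploits the pseudorandomness of $G$ at density $p = C(\log n/n)^{1/d}$. At this density, $np^d = C^d \log n$, so any $d$ vertices of $G$ typically share $\Theta(\log n)$ common neighbours; this is the quantitative fact that drives the argument, matching the exponent $1/d$ in the statement.

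The first step is to establish the required pseudorandomness of $G(n,p)$: a.a.s., for every set $S \subseteq V(G)$ with $|S| \le d$, the common neighbourhood $N_G(S)$ has size $(1 + o(1)) n p^{|S|}$, and these common neighbourhoods are themselves well distributed (e.g., no small ``clump'' of vertices hits an unusually large fraction of them). This follows from Janson's inequality for upper tails and Chernoff for lower tails, combined with a union bound over the $O(n^d)$ choices of $S$; the cost of the union bound is absorbed by the $\log n$ factor inside $np^d$.

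The second step is the embedding itself. Given $H \in \cF_n(d)$, find an ordering $v_1,v_2,\ldots,v_n$ of $V(H)$ in which each $v_i$ has at most $d$ earlier neighbours, and chosen so that the backward-degree profile is balanced: vertices of maximum back-degree $d$ are interspersed with vertices of smaller back-degree, so local flexibility is preserved. Embed the vertices one at a time: when embedding $v_i$, choose $\varphi(v_i)$ from the intersection $\bigcap_{u \in N_H(v_i),\, u < v_i} N_G(\varphi(u))$ and avoid previously used images. The pseudorandomness from Step~1 ensures that this intersection has size close to $n p^{b(v_i)}$, where $b(v_i) \le d$ is the back-degree of $v_i$, so valid choices remain available provided the common neighbourhoods are not locally depleted by earlier choices.

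The main obstacle is the final phase of the embedding: as $V(G)$ fills up, the common neighbourhoods of the remaining $d$-subsets can become entirely occupied, preventing completion. I would overcome this by an \emph{absorbing} argument: reserve in advance a small, structured subset $A \subseteq V(G)$ with the property that any small ``leftover'' set of vertices can be absorbed into the embedding via local exchanges using $A$. Constructing such an absorber with the right flexibility---using that the common neighbourhood of any $d$-set in $A$ is of size $\Theta(\log n)$, not merely $\Theta(1)$---is the main technical work of the proof, and is precisely what makes the threshold $(\log n / n)^{1/d}$ sufficient. Since all probabilistic events in Steps~1--3 depend only on $G$ and not on the particular $H$, a single a.a.s.\ instance of $G(n,p)$ works for every $H \in \cF_n(d)$ simultaneously, yielding universality.
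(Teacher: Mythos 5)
The paper does not prove Theorem~\ref{thm:universality} itself---it cites it from Dellamonica, Kohayakawa, R\"odl and Ruci\'nski---but it proves the analogous $d=2$ case by what the authors explicitly describe as an embedding algorithm modified from those same references, so that argument is the relevant comparison.

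Your Step~1 (pseudorandomness of common neighbourhoods, via Chernoff/Janson and a union bound) corresponds to the paper's good-graph Properties~(P1)--(P2). The divergence is in the embedding itself. The paper does \emph{not} run a vertex-by-vertex greedy with an absorption endgame. Instead it partitions $V(H)$ into classes $W_0,\ldots,W_6$ using Hajnal--Szemer\'edi applied to $H^2$, arranged so that $W_1,\ldots,W_5$ are $2$-independent in $H$ and $W_6$ is $3$-independent, and then embeds each class \emph{as a block} by finding a $W_i$-saturating matching in an auxiliary bipartite graph $B_i$ between $W_i$ and the available image set $V_i^*$; Hall's condition for $B_i$ is verified directly from Property~(P2). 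The endgame is handled not by absorption but by pre-reserving a matching $\mathcal{M}$ in $V_0$ (for $d\ge 3$ one would reserve a family of disjoint $d$-cliques, as sketched in the paper's concluding remarks) with the strong expansion Property~(P1); $W_0 = N_H(W_6)$ is placed onto $V(\mathcal{M})$ first, so that each $w\in W_6$, processed last, merely needs a common neighbour of an edge of $\mathcal{M}$, and~(P1) delivers Hall's condition for that final matching. Nothing is ever re-embedded.

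The concrete gap in your sketch is Step~3. You invoke an absorber $A$ that can fix ``any small leftover set of vertices'' by local exchanges, but for an \emph{arbitrary} $H$ of maximum degree $d$ the leftover vertices of $H$ can carry essentially arbitrary adjacency patterns into the already-embedded part, and you give no indication of how one set $A$, chosen before seeing $H$, is to be flexible enough to accommodate all of them. Absorption is well understood for rigid targets (perfect matchings, Hamilton cycles, $K_{d+1}$-factors), but making a single absorbing structure work uniformly over all of $\mathcal{F}_n(d)$ is not ``the main technical work to be filled in''---it is an open question whether it works at all as you have set it up, and it is exactly the difficulty the partition-plus-Hall route is designed to sidestep. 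A second, smaller issue: ``balancing the back-degree profile'' does not avoid having $\Theta(n)$ vertices of back-degree $d$ (e.g.\ a $K_{d+1}$-factor forces this), so the greedy phase cannot rely on low back-degree vertices to relax the constraints; this pushes essentially all the load onto the unspecified absorber.
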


 In this paper, we show that the statement of  Theorem~\ref{thm:universality} holds for $d=2$.

 \begin{theorem}\label{thm:main} There exists a positive constant $C$ such that if $p\geq C\big(\frac{\log n}{n}\big)^{1/2}$, then the random graph $G(n,p)$ is a.a.s. $\cF_n(2)$-universal.
 \end{theorem}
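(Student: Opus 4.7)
The plan is to decompose each target graph $H\in\mathcal{F}_n(2)$ into its connected components---necessarily paths and cycles, since $\Delta(H)\le 2$---and to embed the triangular components and the remaining components into disjoint parts of $V(G)$ by two separate arguments. Universality will follow from the fact that every embedding step depends on $G=G(n,p)$ only through a fixed list of pseudorandom properties that hold a.a.s.\ at $p\geq C\sqrt{\log n/n}$, so the same $G$ can serve every $H$ simultaneously.

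First I would establish, by standard Chernoff and union bounds, that a.a.s.\ $G$ satisfies the following deterministic properties: every vertex has degree $(1\pm o(1))np$; any two disjoint sets $A,B$ with $|A|,|B|\geq\log^{3}n/p$ span $(1\pm o(1))|A||B|p$ edges; every vertex lies in $(1\pm o(1))\binom{n-1}{2}p^{3}=\Theta(\sqrt{n}(\log n)^{3/2})$ triangles; and similar edge-uniformity and triangle-count statements hold inside every sufficiently large induced subgraph. Given such a $G$, fix any $H\in\mathcal{F}_n(2)$ and write $H=H_3\cup H'$, where $H_3$ is the union of triangular components (so $|V(H_3)|=3t$ for some $0\leq t\leq\lfloor n/3\rfloor$) and $H'$ consists of paths and cycles of length at least $4$. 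Choose a partition $V(G)=U_3\cup U'$ with $|U_3|=3t$ and $|U'|=n-3t$ so that $G[U_3]$ and $G[U']$ inherit the pseudorandom properties above up to a negligible change in parameters; a uniformly random partition achieves this with room to spare.

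The embedding of $H'$ into $G[U']$ is comparatively routine: paths and cycles of length at least $4$ have $2$-density well below that of a triangle, so $G[U']$ lies far above the threshold needed for each component of $H'$, and a vertex-by-vertex P\'osa-rotation or DFS argument, together with the edge-uniformity of $G[U']$, handles the components one at a time, closing cycles at the end via the abundance of edges between any two large sets. The main obstacle is the other embedding: finding a triangle factor in $G[U_3]$ at the tight density $p\asymp\sqrt{\log n/n}$. The Johansson--Kahn--Vu theorem yields a triangle factor in $G$ itself a.a.s., but I need a \emph{robust} version derivable from the deterministic pseudorandom hypotheses that works uniformly over all $t$ and all admissible partitions $U_3$. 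I would construct this by combining an absorbing step---reserving in advance a small collection of triangle-completing gadgets inside $U_3$ that can correct any partial triangle packing to a full factor---with a greedy or nibble main phase that covers $(1-o(1))|U_3|$ vertices by vertex-disjoint triangles using the triangle abundance guaranteed by the pseudorandom properties. This absorbing argument at the tight triangle threshold is where I expect the bulk of the technical work to lie.
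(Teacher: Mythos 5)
Your proposal takes a genuinely different route from the paper, and I think it has a real gap.

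The paper does not decompose $H$ by component type at all. Instead it partitions $V(H)$ into $W_0,\dots,W_6$ based on independence in $H$ and $H^2$, embeds the parts sequentially via Hall's theorem, and handles \emph{every} vertex with two already-embedded neighbours through a single codegree-expansion property (what the paper calls (P2) with $k=2$). The final piece $W_6$ is a $3$-independent set of degree-$2$ vertices, and its two-sided neighbourhood $W_0=N_H(W_6)$ is pre-placed onto a matching $\mathcal M\subset G[V_0]$ chosen so that the endpoints of each matching edge have robustly many common neighbours (property (P1)). This construction works whether or not the cycles in $H$ are triangles; the triangle $K_3$ plays no distinguished role.

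The gap in your plan is the claim that embedding $H'$ (paths and cycles of length $\geq 4$) is ``comparatively routine.'' It is not. First, $H'$ is a \emph{spanning} structure in $G[U']$, and embedding its components ``one at a time'' degrades the host graph; the last few components have almost no room, which is exactly the regime where DFS/expansion alone fails. Second, closing \emph{any} cycle --- not just a triangle --- is a codegree problem: when you place the last vertex $v_k$ of a cycle $v_1v_2\cdots v_k$, its image must be a common neighbour of $f(v_1)$ and $f(v_{k-1})$. At $p\asymp\sqrt{\log n/n}$ the expected codegree of an arbitrary pair is only $\Theta(\log n)$, so ``abundance of edges between any two large sets'' does not directly give you the one required edge. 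P\'osa rotation, which creates many alternative endpoints, needs long paths to rotate and cannot help with a $C_4$ or $C_5$; if $H'$ consists of $n/4$ disjoint four-cycles, your ``routine'' step is a $C_4$-factor problem which, while above threshold, still needs its own absorbing or reservoir argument. In other words, your decomposition isolates the \emph{wrong} hard part: it is not specifically triangles but the codegree constraint at the last vertex of each cycle, and that issue recurs in $H'$. The paper's matching-plus-$3$-independent-set device resolves all of these last-vertex constraints uniformly, so it never has to prove a robust deterministic triangle-factor theorem, a robust $C_4$-factor theorem, and so on.

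If you wish to salvage your plan, you would need either (i) a single absorbing/reservoir framework covering all cycle lengths simultaneously (essentially reinventing (P1)+(P2)), or (ii) to pick one vertex per cycle and treat it exactly like your triangle-apex vertices, which collapses the two cases back into one. The first observation in (ii) is precisely what leads to the paper's $W_6$.
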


The rest of this paper is organized as follows. In the next section we define a notion of a `\emph{good}' graph and introduce two main lemmas, which imply Theorem~\ref{thm:main}. Sections 3 and 4 are for the proofs of the two lemmas.

\vskip 1em
In this paper, we will use the following notation and convention. \\
 \textbf{Notation and convention:} 
For a graph $G$ and $v\in V(G)$, the set $N_G(v)$ denotes the set of neighbors of $v$ in $G$. Similarly, for $U\subset V(G)$, the set $N_G(U)$ denotes the set of vertices which are adjacent to a vertex in $U$. The graph $G[U]$ denotes the subgraph of $G$ induced on $U$. For simplicity, we omit ÔfloorÕ and
ÔceilingÕ symbols when they are not essential.

\section{Good graph and two lemmas} 

In order to show Theorem~\ref{thm:main}, by monotonicity, it suffices to show the statement of Theorem~\ref{thm:main} with $p=C\big(\frac{\log n}{n}\big)^{1/2}$ for a sufficiently large constant $C$. Hence, from now on, we fix $p$ as $p=C\big(\frac{\log n}{n}\big)^{1/2}$.
Throughout the paper, we let
$$\mbox{$\delta=0.01$ \hskip 1em and \hskip 1em $\epsilon=0.001$.}$$

Now we provide the definition of a `\emph{good}' graph. Let $V$ be a vertex set on $n$ vertices. We fix a partition $V=V_0\cup V_1 \cup \cdots \cup V_6$ such that 
$$\mbox{$|V_1|=\cdots=|V_6|=\epsilon n$ \hskip 1em and \hskip 1em $|V_0|=(1-6\epsilon)n\geq (3/4)n$.}$$

For a graph $G$ on $V$ and $k=1 \mbox{ or }2$, let $U\subset V$ and $\cL$ be a collection of pairwise disjoint $k$-subsets of $V\setminus U$. We consider a bipartite graph $B(\cL, U)$ between $\cL$ and $U$, in which $L\in\cL$ and $u\in U$ are adjacent if and only if $L\subset N_G(u)$.

Now we are ready to define a good graph.
 \begin{definition}\label{def:good} 
 A graph $G$ on $V$ is called `\emph{$(n,C)$-good}' if the following properties hold.
 
\begin{enumerate}[$(P1)$]

\item
There exists  a matching $\cM$ of $G[V_0]$ with $|\cM|=2\epsilon n$ such that for all $U\subset V\setminus V(\cM)$ with $\displaystyle |U|\leq  \frac{\delta n}{C^2\log n}$, 
we have $$\Big|\Big\{ \{a,b\}\in \cM \; \big| \; a\sim u, b\sim u \mbox{ for some } u\in U\Big\}\Big|\geq  \frac{C^2\log n}{16 n} |\cM| |U|.$$

\item

 Let $k=1 \mbox{ or } 2$, and $\cL$ be a collection of pairwise disjoint $k$-subsets of $V$. 
 
 If $\displaystyle |\cL|\leq \frac{\delta}{C^k}\Big(\frac{n}{\log n}\Big)^{k/2}$, then, for $V_i$ with $V_i\cap\Big(\bigcup_{L\in \cL}L\Big)=\emptyset$, $i=1,..., 6$, we have that
\begin{equation}\label{eq:P2_1}
|N_{_{\! B(\cL,V_i)}}(\cL)|\geq (1-\delta)C^k\Big(\frac{\log n}{n}\Big)^{k/2}|\cL| |V_i|.
\end{equation}

 If $\displaystyle |\cL|\geq \frac{\log n}{C^{k-1}}\Big(\frac{n}{\log n}\Big)^{k/2}$, then, for all $U$ with $\displaystyle |U|\geq \frac{\log n}{C^{k-1}}\Big(\frac{n}{\log n}\Big)^{k/2}$ and $U\cap\Big(\bigcup_{L\in \cL}L\Big)=\emptyset$, 
the graph $B(\cL,U)$ has at least one edge.

\vskip 1em
\noindent \Big(No requirement is needed when $\displaystyle\frac{\delta}{C^k}\Big(\frac{n}{\log n}\Big)^{k/2}<|\cL|<\frac{\log n}{C^{k-1}}\Big(\frac{n}{\log n}\Big)^{k/2}$\Big).

\end{enumerate}

 \end{definition}
 
 \begin{remark}
For $p=C\big(\frac{\log n}{n}\big)^{1/2}$,  the above inequality~\eqref{eq:P2_1} may be written as 
$$|N_{_{\! B(\cL,V_i)}}(\cL)|\geq (1-\delta)p^k|\cL| |V_i|.$$
Notice that $p^k |\cL| |V_i|$ is the expected number of edges in $B(\cL,V_i)$ if $G$ were $G(n,p)$. It is easy to see that only few vertices of $V_i$ are of degree $2$ or more in $B(\cL,V_i)$. Hence, $|N_{B(\cL,V_i)}(\cL)|$ is almost the same as the number of edges in $B(\cL,V_i)$.

\end{remark}

 We will show the following two lemmas.
 \begin{lemma}\label{lem:good_universal} There exists a positive constant $C$ such that
an $(n,C)$-good graph is $\cF_n(2)$-universal provided that $n$ is sufficiently large.
 \end{lemma}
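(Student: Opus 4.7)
Given $H \in \cF_n(2)$, write $H$ as a vertex-disjoint union of paths and cycles, since $\Delta(H)\le 2$. The embedding $\phi \colon V(H) \to V(G)$ is built in four phases: first the short cycles of $H$, then the bulk of the long paths and cycles in parallel, then the closure of the long cycles, and finally the absorption of the leftover using the matching $\cM$ supplied by $(P1)$.

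\textbf{Phase 1 (short cycles).} Fix a large constant $\ell_0$. Process each cycle of length at most $\ell_0$ one by one, embedding its vertices into $V_0\setminus V(\cM)$ by repeated use of $(P2)$ with $k=1$ (to extend the current path by one vertex) and one use of $(P2)$ with $k=2$ (to close the cycle with a common neighbor of the two remaining ends). The collection $\cL$ used at each step is a single singleton or pair, which lies well inside the small-$|\cL|$ regime of $(P2)$, and the set of already-used vertices remains negligible compared to the expansion guarantees, so an unused extension always exists.

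\textbf{Phase 2 (long paths and cycles).} Process all long components in parallel, batch by batch. At each batch, maintain the collection $\cL$ of ``head'' singletons of the partial embedding (one per unfinished component), apply $(P2)$ with $k=1$ to $\cL$ against a reserved $V_i$, and use a Hall-type matching argument in the bipartite graph $B(\cL,V_i)$ to extend every head simultaneously by a distinct new vertex. Rotate through the reserved sets $V_1,\dots,V_5$ across successive batches so that none of them fills up, while placing the bulk of each long component inside $V_0\setminus V(\cM)$; keep $V_6$ in reserve for Phase 3. Crucially, deliberately arrange in this phase that a linear number of consecutive pairs in the embedded image of $H$ coincide with edges of the matching $\cM$ — these will serve as absorption sites in Phase 4.

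\textbf{Phase 3 (cycle closure) and Phase 4 (absorption).} For each long cycle, a common neighbor of its two embedded endpoints is still required; apply $(P2)$ with $k=2$ to the collection $\cL$ of these endpoint pairs and run Hall's theorem on $B(\cL,V_6)$ to pick a distinct closing vertex in $V_6$ for every cycle. After this, at most $|U|\le \delta n/(C^2\log n)$ vertices $U\subset V_0\setminus V(\cM)$ remain unassigned. Property $(P1)$ provides at least $\tfrac{C^2\log n}{16 n}|\cM|\,|U|$ matching edges $\{a,b\}\in\cM$ each jointly adjacent to some vertex of $U$, i.e., a linear-in-$|U|$ pool of candidate absorption sites; a further Hall-type matching between $U$ and these $\cM$-edges, restricted to those sites that were pre-reserved in Phase 2, allows a local swap at each site that inserts the leftover vertex $u$ into the embedding.

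\textbf{Main obstacle.} The delicate part is the interface between Phases 2 and 4. Because max-degree-$2$ graphs are extremely rigid, ``inserting'' a leftover vertex $u$ via an $\cM$-edge $\{a,b\}$ requires that $\{a,b\}$ already appear as a consecutive pair on a long component of $\phi(H)$ and that enough slack (free endpoint positions or rerouting capacity) was reserved in Phase 2 to absorb the perturbation. Pre-planning these absorption sites in Phase 2 while respecting the precise size windows of $\cL$ in $(P2)$ — and reconciling the counts of leftover slots in $V_0\setminus V(\cM)$ with the number of usable $\cM$-edges from $(P1)$ — is the main technical step and is what motivates both the six-set partition of $V$ and the precise quantitative form of $(P1)$.
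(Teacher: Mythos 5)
Your proposal takes a genuinely different route from the paper (component-by-component embedding of paths and cycles with a final absorption phase, versus the paper's Hajnal--Szemer\'edi-based partition of $V(H)$ into 2-independent classes $W_1,\dots,W_5$ plus a pre-designated 3-independent degree-2 class $W_6$). Unfortunately the route you sketch has a genuine gap at exactly the place you flag as ``the main obstacle.'' The ``local swap'' in Phase~4 is not a well-defined operation for embedding a \emph{fixed} degree-2 graph $H$: if $\{a,b\}\in\cM$ is the image of a pair $\{x,y\}$ \emph{consecutive} on a path/cycle of $H$, and $u$ is a leftover $G$-vertex adjacent to both $a$ and $b$, there is no local rerouting that uses $u$ without either lengthening the component of $H$ or leaving some other $G$-vertex and $H$-vertex both stranded. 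The usable structure is different: you need a leftover $H$-vertex $z$ of degree~2 whose two $H$-neighbors $y_1,y_2$ (which are at distance~2, \emph{not} consecutive) were pre-embedded onto the endpoints of a single $\cM$-edge; then $z$ can be matched to any common neighbor of that $\cM$-edge, and (P1) provides Hall's condition for this matching. This forces you to choose, in advance, a linear-size 3-independent set of degree-2 vertices of $H$ to serve as the residual class and to embed its neighborhood onto $\cM$ at the very start -- which is precisely the paper's $W_6$ / $W_0=N_H(W_6)$ construction, and is incompatible with the ad hoc, post hoc absorption of leftover $G$-vertices in your Phase~4. Your phases~1--3 also do not control which $H$-vertices remain unembedded when Phase~4 begins, so even with the corrected absorber structure the argument does not close.

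A secondary, smaller issue: batch-extending all ``heads'' simultaneously via (P2) silently assumes the batch size lands in one of the two regimes of (P2); the intermediate window (the one excluded in Definition~\ref{def:good}) has to be handled, e.g.\ by passing to a subset as the paper does in Case~2 of Lemma~\ref{clm:matching}. The paper avoids all of this by using Hajnal--Szemer\'edi on $H^2$ to obtain the 2-independent classes $W_1,\dots,W_5$ of linear size, so that each class is embedded in a single application of Hall's theorem rather than by rotating batches; you should consider whether your parallel batch scheme really keeps $|\cL|$ out of the forbidden window at every step.
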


 \begin{lemma}\label{lem:good}
There exists a positive constant $C$ such that the random graph $G(n,p)$ on $V$ with $p=C\big(\frac{\log n}{n}\big)^{1/2}$ is a.a.s. $(n,C)$-good.
 \end{lemma}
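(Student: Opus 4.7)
The plan is to establish properties $(P1)$ and $(P2)$ separately; in each case I would compute the expectation of the relevant random quantity, isolate a family of mutually independent indicators that compose it, apply a Chernoff bound, and then union bound over the relevant configurations. The key observation I would use repeatedly is that the events ``$L\subset N_G(u)$'' over a family of pairwise disjoint sets $L$ and vertices $u$ disjoint from $\bigcup L$ depend on pairwise disjoint edges of $G$, and are hence fully independent.

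For $(P1)$, a direct concentration argument is awkward because $\cM$ must be chosen from $G$ itself, so I would decouple by sprinkling. Write $G=G_1\cup G_2$ with independent $G_1\sim G(n,p_1)$ and $G_2\sim G(n,p_2)$ satisfying $(1-p_1)(1-p_2)=1-p$, taking $p_1=(\log n)^2/n$ so that $p_2=(1-o(1))p$. In the first round I would use $G_1$ to find a matching $\cM$ in $G_1[V_0]$ of size $2\epsilon n$; since $p_1|V_0|\to\infty$, $G_1[V_0]$ a.a.s.\ contains a matching saturating all but $o(n)$ vertices, far more than required. Conditioning on $\cM$ and then exposing $G_2$, for each fixed $U\subset V\setminus V(\cM)$ with $|U|=u\leq \delta n/(C^2\log n)$ the number of pairs $\{a,b\}\in\cM$ having a common $G_2$-neighbor in $U$ is a sum over $\cM$ of independent Bernoullis of parameter $1-(1-p_2^2)^u\geq (1-o(1))up^2$; its expectation is $(1-o(1))\cdot 2\epsilon uC^2\log n$, which is $16$ times the target $(C^2\log n/16n)|\cM|u$. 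A Chernoff bound then gives failure probability $\exp(-c_1(\epsilon,C)u\log n)$, and summing $\binom{n}{u}e^{-c_1u\log n}$ over $1\leq u\leq \delta n/(C^2\log n)$ is $o(1)$ once $C$ is large enough that $c_1>1$.

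For $(P2)$ with small $|\cL|$, I would fix $\cL$ and $V_i$ disjoint from $\bigcup_{L\in\cL}L$ and, for each $u\in V_i$, let $Y_u$ be the indicator of the event that some $L\in\cL$ satisfies $L\subset N_G(u)$. By the independence observation, $\{Y_u:u\in V_i\}$ is a family of independent Bernoullis with parameter $q=1-(1-p^k)^{|\cL|}$, and since $|\cL|p^k\leq\delta$, a second-order expansion yields $q\geq (1-\delta/2)|\cL|p^k$; hence $\EE[|N_{B(\cL,V_i)}(\cL)|]\geq (1-\delta/2)|V_i||\cL|p^k$, comfortably above the target $(1-\delta)|V_i||\cL|p^k$. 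Chernoff at relative deviation $\delta/3$ yields failure probability $\exp(-\Omega(|V_i||\cL|p^k))$, which for $k=2$ is $\exp(-\Omega(C^2|\cL|\log n))$ and for $k=1$ is even smaller; these beat the collection count $\binom{n^k}{|\cL|}\leq (en^k/|\cL|)^{|\cL|}$ once $C$ is large, completing the small-$|\cL|$ union bound.

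For $(P2)$ with large $|\cL|$, the event ``$B(\cL,U)$ has an edge'' is monotone in both $|\cL|$ and $|U|$, so it is enough to treat the minimum sizes $|\cL|=|U|=(\log n/C^{k-1})(n/\log n)^{k/2}$. For such fixed $\cL,U$ with $U\cap\bigcup L=\emptyset$ the independence observation gives $\PP(B(\cL,U)\text{ has no edge})=(1-p^k)^{|\cL||U|}\leq \exp(-p^k|\cL||U|)$, and a short calculation yields $p^k|\cL||U|\geq C^{2-k}(\log n)^{2-k/2}n^{k/2}$, namely of order $n\log n$ for $k=2$ and of order $C\sqrt{n}(\log n)^{3/2}$ for $k=1$. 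Both bounds dominate the logarithm $O((|\cL|+|U|)\log n)$ of the union bound $\binom{n^k}{|\cL|}\binom{n}{|U|}$ when $C$ is sufficiently large. The main obstacle throughout is the $(P1)$ decoupling, since $\cM$ and the witnesses of its expansion must live in statistically independent parts of $G$, and sprinkling is the cleanest way to arrange this; once that is done, the remaining work is a standard Chernoff-plus-union-bound, the only art being to recognize the correct independent family of indicators in each regime.
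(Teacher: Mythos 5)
Your proposal follows essentially the same route as the paper's proof: sprinkling $G=G_1\cup G_2$ to decouple the choice of $\cM$ from the expansion property in $(P1)$, followed by the Erd\H{o}s--R\'enyi near-perfect-matching result, and then Chernoff-plus-union-bound over configurations for both $(P1)$ and $(P2)$, exploiting exactly the same independent indicator families ($X_e$ for $e\in\cM$, and $X_v$/$Y_{L,u}$ for $(P2)$). The only cosmetic differences are your choice $p_1=(\log n)^2/n$ versus the paper's $2\log n/n$, and in $(P2)(b)$ your use of monotonicity to reduce to the extremal sizes together with a direct computation of $\Pr[\text{no edge}]=(1-p^k)^{|\cL||U|}$ where the paper instead applies Chernoff and union-bounds over all admissible $(\ell,r)$; both are fine and marginally cleaner, but not a genuinely different argument.
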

 
 Our proof of Lemmas~\ref{lem:good_universal} and~\ref{lem:good} will be given in Sections~\ref{sec:good_universal} and~\ref{sec:good}, respectively.
 Theorem~\ref{thm:main} clearly follows from Lemmas~\ref{lem:good_universal} and~\ref{lem:good}.

\section{Universality of good graph} \label{sec:good_universal}

For the proof of Lemma~\ref{lem:good_universal}, we may assume that $H$ is a maximal graph in $\cF_n(2)$, in the sense that no edge may be added to $H$ to be a graph in $\cF_n(2)$. Then, it is easy to see that all but at most one vertex of $H$ have degree $2$. We will show that there exists a positive constant $C$ such that, for a sufficiently large $n$, an $(n,C)$-good graph $G$ contains a copy of $H$ as a subgraph. To this end, a partition of $W:=V(H)$ will be used, and each part will be embedded at a time.
A subset of $W$ is called \emph{$k$-independent} in $H$ if the distance between every distinct pair of vertices in the subset is greater than $k$. 

\begin{lemma}\label{clm:partition of X} Let $H$ be a maximal graph in $\cF_n(2)$. Then there is a partition $W:=V(H)=W_0\cup W_1\cup \cdots \cup W_6$ with
\begin{equation}\label{eq:size of W_i} |W_0|= 4\epsilon n, \hskip 1em |W_6|=2\epsilon n, \hskip 1em |W_i|\geq 2\epsilon n, \hskip 0.5em \mbox{$i=1,2,..., 5,$}\end{equation}
such that 
\begin{enumerate}
\item\label{item:cond3} $W_1,..., W_5$ are $2$-independent.
\item\label{item:cond1}  $W_6$ is $3$-independent, and all vertices of $W_6$ are of degree $2$. 
\item\label{item:cond2} $W_0=N_H(W_6)$.

\end{enumerate}

\end{lemma}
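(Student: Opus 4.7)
The plan rests on the very restricted structure of a maximal graph $H\in\cF_n(2)$. Since no new edge can be added without creating a vertex of degree $3$, the set of vertices of $H$ of degree less than $2$ forms a clique in $H$ and so has size at most $3$; a short check (three such vertices would form a triangle and hence have degree at least $2$) then shows that $H$ is a disjoint union of cycles of length at least $3$, together with at most one $K_2$ component or one isolated vertex. In particular, at least $n-2$ vertices of $H$ have degree exactly $2$.

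I will build the partition in three steps. \emph{Step 1 (choice of $W_6$).} In each cycle $C_\ell$ of $H$ one may pick $\lfloor\ell/4\rfloor$ vertices at pairwise cyclic distance at least $4$ when $\ell\ge 4$, and a single vertex when $\ell=3$; the disjoint union over all cycles is a $3$-independent set of degree-$2$ vertices. Using the elementary bound $\lfloor\ell/4\rfloor\ge \ell/7$ for $\ell\ge 4$ (and $1\ge 3/7$ for $\ell=3$), this set has size at least $(n-2)/7$, which far exceeds $2\epsilon n$ for $n$ large; take $W_6$ to be any $2\epsilon n$ of these vertices, so condition (2) of the lemma holds. \emph{Step 2 (definition of $W_0$).} Set $W_0:=N_H(W_6)$, so condition (3) holds by construction. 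Since the vertices of $W_6$ are pairwise at $H$-distance at least $4$, their open neighborhoods in $H$ are disjoint; since each has exactly two neighbors, $|W_0|=2|W_6|=4\epsilon n$.

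\emph{Step 3 (partition of the remainder).} Let $H':=H[V(H)\setminus(W_0\cup W_6)]$. Each cycle of $H$ meeting $W_6$ loses, for every $W_6$-vertex it contains, the three-vertex arc consisting of that vertex together with its two $W_0$-neighbors, and is thereby cut into a disjoint union of paths; untouched cycles remain intact. Every component of $H'$ is therefore a path or an untouched cycle, and any two distinct components of $H'$ lie at $H$-distance at least $4$. Thus a proper $5$-coloring of the square of each component automatically yields $2$-independent color classes in $H$. On path components I use the periodic pattern $1,2,3,4,5,1,2,\ldots$; on cycles of length at least $6$ the square has maximum degree $4$ and is not $K_5$, so Brooks' theorem supplies a proper $5$-coloring; on $C_3, C_4, C_5$ I use three, four, or five distinct colors respectively. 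This gives condition (1). To ensure that each class $W_i$ reaches size at least $2\epsilon n$, I cycle the palette across components (for instance, successive triangles rotate through the palettes $\{1,2,3\},\{2,3,4\},\{3,4,5\},\{4,5,1\},\{5,1,2\}$, and analogous cyclic shifts of the periodic pattern are applied to longer components). Since $|V(H')|=(1-6\epsilon)n$, the average class size is approximately $(1-6\epsilon)n/5\approx 0.2\,n$, about two orders of magnitude larger than the required threshold $2\epsilon n$.

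The main obstacle is the balancing in Step 3: the $2$-independent $5$-coloring of $H'$ must be arranged so that no class is starved, and in extreme cases (for example, when $H$ is a disjoint union of triangles, so that every component of $H'$ is a $K_3$ forcing three distinct colors) the local constraints are rigid. However, since the required minimum $2\epsilon n$ is roughly a hundred times smaller than the average class size, any reasonable cyclic rotation scheme comfortably suffices; the difficulty is thus one of bookkeeping rather than of combinatorial substance.
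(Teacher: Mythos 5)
Your overall strategy is sound but differs substantially from the paper's, and it has a genuine gap at exactly the point you flag as ``bookkeeping.'' The paper builds $W_6$ by a greedy argument that needs no structural classification of $H$ (every vertex has at most $6$ others within distance $3$, so a $3$-independent set of size $n/7$ exists), sets $W_0=N_H(W_6)$, and then obtains $W_1,\dots,W_5$ in one stroke by applying the Hajnal--Szemer\'edi theorem to $H^2$, which has maximum degree at most $4$: this produces an \emph{equitable} proper $5$-coloring of all of $W$, and deleting $W_0\cup W_6$ from each class leaves at least $n/5-1-6\epsilon n\ge 2\epsilon n$ vertices per class. You instead classify the components of $H$ explicitly, remove $W_0\cup W_6$ first, and try to $5$-color $(H')^2$ component by component, which forces you to argue the size balance by hand.

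The main problem is the balance in Step 3: you do not actually prove that cycling palettes produces five classes each of size at least $2\epsilon n$. The graph $H'$ can have $\Theta(n)$ components of mixed small sizes (triangles, $C_4$'s, short arcs), so the per-component deviations from a $1/5$ split could in principle accumulate linearly, and one must verify that the chosen rotation cancels them across all component types simultaneously. That is exactly the content of an equitable-coloring statement, which is what Hajnal--Szemer\'edi supplies for free. Your observation that the average class size is $\approx (1-6\epsilon)n/5$ gives no control on the minimum; and ``any reasonable cyclic rotation scheme comfortably suffices'' is an assertion, not a proof. This step is the crux of the lemma and is left unproved.

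A smaller issue: you justify only that distinct components of $H'$ are at $H$-distance $\ge 4$, but this alone does not show that a proper coloring of $(H')^2$ is $2$-independent \emph{in $H$}. Within a single arc of a cut cycle, two vertices far apart along the arc are also joined in $H$ by the complementary route through the deleted segment; one must check that this route has length at least $4$ (it does, since it crosses the three deleted vertices), and the same check is needed when an arc sits between two deleted segments. The argument survives, but it is a separate case you did not address. Coloring $H^2$ directly, as the paper does, avoids any distinction between $d_{H'}$ and $d_H$ and removes both of these difficulties at once.
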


\begin{proof}
We first construct $W_6$ and $W_0$. Since the maximum degree of $H$ is $2$,
 for each vertex $v$ in $H$, there are at most $6$ vertices that are of distance $3$ or less from $v$, excluding $v$ itself.
  By the greedy algorithm, it is easy to see that there is a 3-independent set of size at least $n/7$. Hence, we may choose $W_6$ satisfying $|W_6|=2\epsilon n$ and~\eqref{item:cond1} 
as there is at most one vertex of degree less than $2$.
Let $W_0:=\bigcup_{w\in V_6} N_H(w)$. Clearly, $|W_0|=4\epsilon n$ as $W_6$ is $3$-independent.

Next, we consider $W_i$ for $1\leq i\leq 5$. Let $H^2$ be the graph on the vertex set $W$ in which two vertices are adjacent if and only if two vertices are of distance at most $2$ in $H$. Since $H$ has the maximum degree $2$, the maximum degree of $H^2$ is at most $4$.
Using Hajnal--Szemer\'edi Theorem~\cite{Hajnal1970}, we may partition $W$ into $5$ independent sets of $H^2$ so that each part is of size at least $n/5-1$. By removing all vertices in $W_0\cup W_6$ from each part, $W_1, W_2,..., W_5$ may be obtained. Then, it is clear that each $W_i$ is $2$-independent in $H$ and
$|W_i|$ is at least $n/5-1-6\epsilon n \geq 2\epsilon n$, for  $i=1,...,5$. 
\end{proof}

A bijection from $W$ to $V=V(G)$ is called an \emph{embedding} of $H$  to $G$ if it maps each edge of $H$ to an edge of $G$.
We now find an embedding of $H$ to $G$ using an algorithm modified from the embedding algorithm in~\cite{Dellamonica13:universal, Dellamonica12:universal}: Take a partition $W_0,..., W_6$ of $W$ as described in Lemma~\ref{clm:partition of X}. We will embed $W_i$ into $V_0\cup\cdots\cup V_i$.

\renewcommand{\eps}{\epsilon}
\newcommand{\old}[1]{}
To map  $W_0$ into $V_0$, 
recall that $W_0 = N_H (W_6)$ and $|N_H ( w)|=2$ for all $w\in W_6$. For a matching ${\mathcal M}
=\{e_{_1},..., e_{_{2\eps n}}\}$ in $G[V_0]$
with (P1) and  $W_6=\{w_{_i},..., w_{_{2\eps n}}\}$, it is enough for us to  take a bijection, say $f_0$,  from $W_0$ to $V({\mathcal M})$ such that 
$N_H (w_{_i})$ is mapped to $e_{_i}$, where $V({\mathcal M})$ is the set of end vertices of all edges in ${\mathcal M}$.

The mapping $f_0$ is an embedding of $W_0$ to $V_0$: 
Since $V_6$ is a $3$-independent set in $H$, the sets $N_H(w)$, $w\in V_6$, are pairwise disjoint and there is no edge between them. Hence, every edge $e$ in $W_0$ belongs to $N_H(w)$ for some $w\in W_6$. As every $N_H(w)$ is mapped to an edge in $\cM\subset E(G[V_0])$ under $f_0$, the edge $e$ is mapped to an edge in $G[V_0]$.

Assuming  an embedding 
$$f_{i-1}:W_0\cup W_1\cup \cdots \cup W_{i-1}\rightarrow V_0\cup V_1\cup \cdots \cup V_{i-1}$$ is defined, $i=1,2,...,6$, we will embed $W_i$ into 
$V^*_i:=V_i \cup (V_0\cup V_1\cup \cdots \cup V_{i-1})\setminus {\rm Image}(f_{i-1}) $ 
to extend 
$f_{i-1}$ to an embedding $$f_i: W_0\cup W_1\cup \cdots \cup W_{i}\rightarrow V_0\cup V_1\cup \cdots \cup V_{i}. $$ 
Let $B_i(W_i, V^*_i)$, or just $B_i$, be   the bipartite graph in which $w\in W_i$ and $v\in V^*_i$ are adjacent 
if and only if 
$$f_{i-1}\Big(N_H(w)\cap(W_0\cup\cdots\cup W_{i-1})\Big)\subset N_G(v).$$ (See Figure~\ref{fig:B_i}).
\begin{figure}
\begin{center}
\includegraphics[scale=0.15]{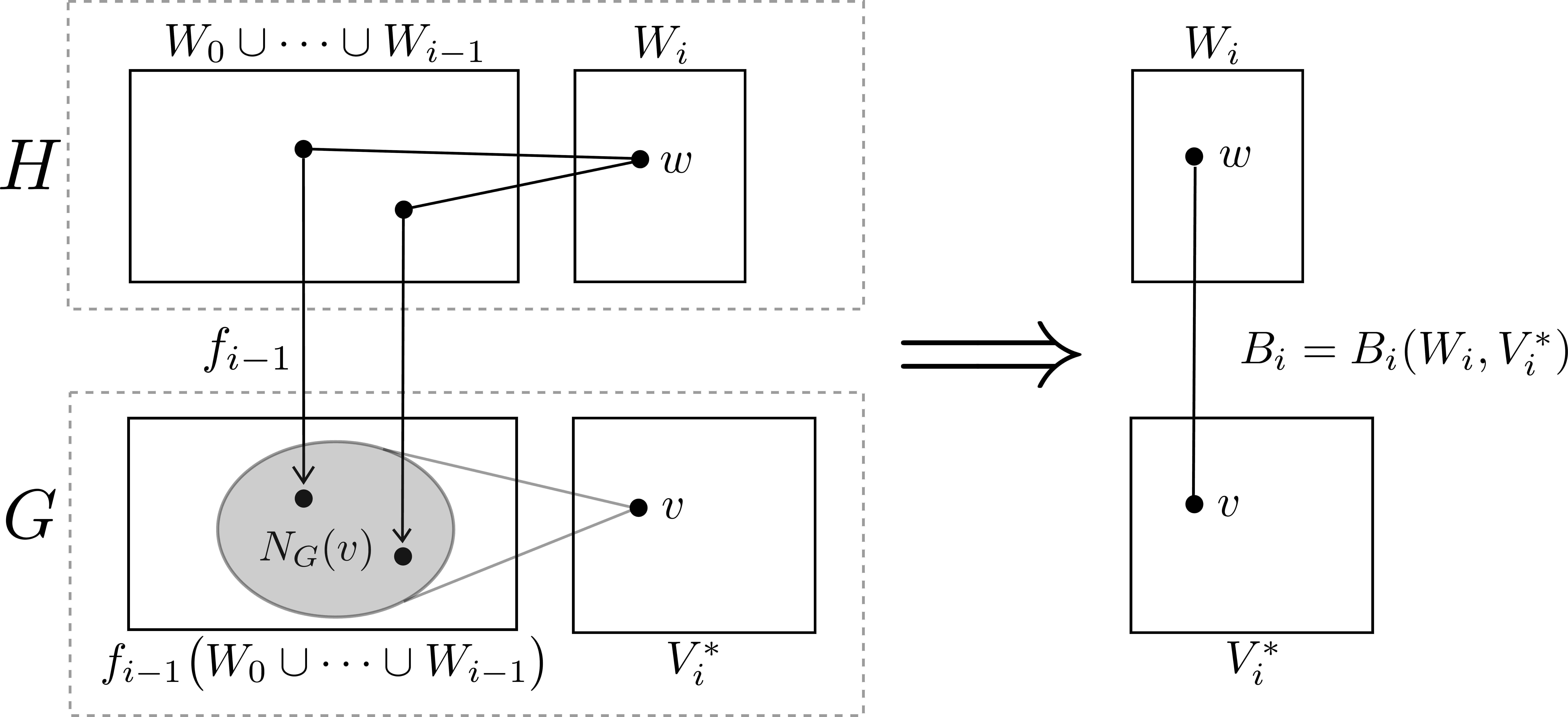}
\end{center}
\caption{The bipartite graph $B_i=B_i(W_i,V_i^*)$}
\label{fig:B_i}
\end{figure}
Or equivalently, for $L_i(w):=f_{i-1}\Big(N_H(w)\cap(W_0\cup\cdots \cup W_{i-1})\Big)$, $\cL_i:=\Big\{ L_i(w) : w\in W_i \Big\}$ and the bipartite graph $B(\cL_i, V^*_i)$ defined just before Definition~\ref{def:good},
 \begin{equation}\label{eq:two bipartite}\mbox{$w\sim v$ in $B_i(W_i,V_i^*)$ \hskip 0.5em if and only if \hskip 0.5em $L_i(w)\sim v$ in $B(\cL_i,V_i^*)$}.\end{equation}
   If possible, take  a
$W_{\! i}\, $-matching of $B_i$, i.e., a matching in $B_i$  that covers all vertices in $W_i$. (Later, we will show that this is possible).
 The image of $w \in W_i$ under  the mapping $f_i $  is defined to be the vertex in $V^*_i$  that is matched to $w$ in the $W_i$-matching. For $w\not\in W_i$, $f_i(w)=f_{i-1} (w)$. 

The mapping $f_i$ is an embedding of $W_i$ to $V^*_i$:
For an edge $e$ in $W_0\cup \cdots\cup W_i$, at most one end point of $e$ is in $W_i$ since $W_i$ is 2-independent, especially independent. If both ends of $e$ are in $W_0\cup \cdots \cup W_{i-1}$, then $f_i(e)=f_{i-1}(e)$ is an edge in $G$. If one end, say $w$, of $e$ is in $W_i$, then the other end, say $w'$, is in $N_H(w)\cap (W_0\cup \cdots \cup W_{i-1})$. Hence, $w\sim f_i(w)$ in $B_i(W_i,V_i^*)$ implies that $$f_{i-1}\Big(N_H(w)\cap(W_0\cup\cdots\cup W_{i-1})\Big)\subset N_G(f_i(w)),$$
in particular, $f_i(w')=f_{i-1}(w')\in N_G(f_i(w))$, i.e., $\{f_i(w'),f_i(w)\}$ is an edge.

It remains to show that there exists a $W_i$-matching in $B_i=B_i(W_i,V_i^*)$ for $i=1,..., 6$. We first show the following, which guarantees Hall's condition for a subset $U$ of $W_i$ satisfying some condition. 

\begin{lemma}\label{clm:matching} Let $i=1,..., 6$. If $U\subset W_i$ satisfying $|U|\leq |V^*_i|-n/C$, then
\begin{equation*}
|N_{B_i}(U)|\geq |U|.
\end{equation*}
\end{lemma}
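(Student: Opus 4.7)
The plan is to verify Hall's condition $|N_{B_i}(U)| \geq |U|$ by passing through the equivalence~\eqref{eq:two bipartite} and invoking property~(P2). For each $w \in U$, write $L_i(w) = f_{i-1}\bigl(N_H(w)\cap(W_0\cup\cdots\cup W_{i-1})\bigr)$, so $|L_i(w)|\in\{0,1,2\}$, and split $U=U_0\cup U_1\cup U_2$ according to $|L_i(w)|$. Because $W_i$ is $2$-independent in $H$, the sets $L_i(w)$, $w\in U$, are pairwise disjoint and contained in $\mathrm{Image}(f_{i-1})$, hence disjoint from $V_i^*\supset V_i$. A vertex $w\in U_0$ has $L_i(w)=\emptyset$ and is therefore adjacent in $B_i$ to every vertex of $V_i^*$, so if $U_0\neq\emptyset$ then $|N_{B_i}(U)|=|V_i^*|\geq|U|$ and we are done. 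I may thus assume $U=U_1\cup U_2$, and set $\cL^{(k)}=\{L_i(w):w\in U_k\}$ for $k=1,2$, so that (P2) applies to each $\cL^{(k)}$ with $V_i$ or $V_i^*$.

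The argument then splits on the ``large'' threshold $T_k:=\frac{\log n}{C^{k-1}}\bigl(\frac{n}{\log n}\bigr)^{k/2}$ of (P2). If $|\cL^{(k)}|\geq T_k$ for some $k$, I would apply the second part of (P2) with the role of $U$ played by the ``missed'' set $V_i^*\setminus N_{B(\cL^{(k)},V_i^*)}(\cL^{(k)})$: this set is disjoint from $\bigcup\cL^{(k)}$ and from $N_{B(\cL^{(k)},V_i^*)}(\cL^{(k)})$, so it must have size less than $T_k\leq n/C$ (using $T_1=\sqrt{n\log n}\leq n/C$ for $n$ large). Hence $|N_{B_i}(U)|\geq|V_i^*|-n/C\geq|U|$ by hypothesis, and this case is complete.

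Otherwise, both $|\cL^{(k)}|<T_k$, which forces $|U|=|\cL^{(1)}|+|\cL^{(2)}|<\sqrt{n\log n}+n/C<2n/C$ for $n$ large. In this regime I would use the first part of (P2), applied to $V_i$. If some $|\cL^{(k')}|$ reaches the ``small'' threshold $t_{k'}:=\frac{\delta}{C^{k'}}\bigl(\frac{n}{\log n}\bigr)^{k'/2}$, I restrict to an arbitrary sub-collection $\cL'\subset\cL^{(k')}$ with $|\cL'|=t_{k'}$ and apply (P2) first part to $\cL'$; the telescoping of constants yields $|N_{B(\cL',V_i)}(\cL')|\geq(1-\delta)\delta\epsilon\,n$, which exceeds $2n/C\geq|U|$ once $C\geq 2/((1-\delta)\delta\epsilon)$. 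If instead both $|\cL^{(k)}|<t_k$, I choose $k^*$ with $|\cL^{(k^*)}|\geq|U|/2$ and apply (P2) first part directly to $\cL^{(k^*)}$; the output is $|N_{B(\cL^{(k^*)},V_i)}(\cL^{(k^*)})|\geq(1-\delta)\epsilon\,C^{k^*} n^{1-k^*/2}(\log n)^{k^*/2}\cdot|\cL^{(k^*)}|$, and the blow-up factor (of order $\sqrt{n\log n}$ or $\log n$) comfortably beats the ratio $|U|/|\cL^{(k^*)}|\leq 2$.

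I expect the step I would spend the most care on is navigating the ``middle'' range $t_k\leq|\cL^{(k)}|<T_k$, on which (P2) is silent; the subset-to-size-$t_{k'}$ trick above bridges the gap by exploiting the monotonicity $N_{B(\cL',V_i)}(\cL')\subset N_{B(\cL^{(k')},V_i)}(\cL^{(k')})$ together with the observation that (P2) first part, evaluated at $|\cL'|=t_{k'}$, already delivers a neighborhood of linear size $(1-\delta)\delta\epsilon\,n$. A minor bookkeeping item is to check the disjointness hypotheses of (P2), which follow from the $2$-independence of $W_i$ and from $\bigcup\cL^{(k)}\subset\mathrm{Image}(f_{i-1})$ being disjoint from $V_i^*$.
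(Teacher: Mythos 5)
Your proof is correct and follows essentially the same approach as the paper's: the partition $U=U_0\cup U_1\cup U_2$, the reduction via~\eqref{eq:two bipartite}, the three-way split at the thresholds $t_k$ and $T_k$ of~(P2), and the subset-restriction trick to bridge the middle range where~(P2) is silent. The only difference is organizational — the paper first fixes a majority part $U_k$ with $|U_k|\geq|U|/2$ and then case-splits on $|U_k|$, while you case-split globally (using $|U|<T_1+T_2<2n/C$ in the non-large regime) — but the same three applications of~(P2) are used in the same way.
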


\begin{proof}

 Let $U=U_0\cup U_1\cup U_2$, where $$U_j:=\Big\{w\in W_i : |N_H(w)\cap (W_0\cup \cdots \cup W_{i-1})|=j\Big\}.$$

\noindent If $U_0\neq \emptyset$, then $N_{B_i}(U)=V^*_i$ and hence $|N_{B_i}(U)|=|V^*_i|>|U|$ as $|U|\leq |V^*_i|-n/C$. We now assume that $U_0= \emptyset$.
Take $U_k$ such that $|U_k|\geq |U|/2$. 

\vskip 1em
\noindent \textbf{Case 1}: the case when $|U_k|\leq \frac{\delta}{C^k}\big(\frac{n}{\log n}\big)^{k/2}$. For $L_i(u):=f_{i-1}\Big(N_H(u)\cap(W_1\cup\cdots\cup W_{i-1})\Big)$ and $\cL(U_k)=\Big\{L_i(u)
: u\in U_k\Big\}$, We have that 
$$N_{_{B(\cL_i, V^*_i)}}\big(\cL(U_k)\big)\cap V_i\subset N_{B_i}(U_k):$$
For $v \in V_i$ with $L(u)\sim v$ in $B(\cL_i, V^*_i)$ for some $u\in U_k$, it follows from~\eqref{eq:two bipartite} that $u\sim v$ in $B_i$ for $u\in U_k$,
or equivalently, $v\in N_{B_i}(U_k)$. Notice that
$$N_{_{B(\cL_i, V^*_i)}}\big(\cL(U_k)\big)\cap V_i = N_{_{B(\cL(U_k),V_i)}}\Big(\cL(U_k)\Big).
$$
Property (P2) implies that
\begin{eqnarray}\label{eq:Case1}
 |N_{B_i}(U_k)|&\geq& |N_{_{B(\cL(U_k),V_i)}}(\cL(U_k))|  \geq    (1-\delta)C^k\epsilon n\Big(\frac{\log n}{n}\Big)^{k/2}|U_k|\nonumber \\ &\geq&  \frac{\epsilon C^2(\log n)}{2} |U_k|,
 \end{eqnarray}
 as $k=1\mbox{ or }2$.
 In particular, $|N_{B_i}(U_k)|\geq 2|U_k|\geq |U|$.
 
 \vskip 1em
\noindent\textbf{Case 2}: the case when $\frac{\delta}{C^k}\big(\frac{n}{\log n}\big)^{k/2}<|U_k|\leq \frac{\log n}{C^{k-1}}\big(\frac{n}{\log n}\big)^{k/2}$.
      Taking a subset $U'_k$ of $U_k$ of size $\frac{\delta}{C^k}\big(\frac{n}{\log n}\big)^{k/2}$, it follows from~\eqref{eq:Case1} that
\begin{eqnarray*}
|N_{B_i}(U_k)|&\geq &|N_{B_i}(U'_k)|\geq \frac{\epsilon\delta C^{2-k}(\log n)}{2} \Big(\frac{n}{\log n}\Big)^{k/2} \\
&=& \frac{\epsilon \delta C}{2}\cdot \frac{\log n}{C^{k-1}}\Big(\frac{n}{\log n}\Big)^{k/2}\geq 2|U_k|\geq |U|
\end{eqnarray*}
  as $C$ is sufficiently large and $\epsilon$ and $\delta$ are absolute constants.
      
      \vskip 1em
\noindent \textbf{Case 3}: the case when $|U_k|> \frac{\log n}{C^{k-1}}\big(\frac{n}{\log n}\big)^{k/2}$. We will show that 
\begin{equation*}
|N_{B_i}(U_k)|\geq |V^*_i|-n/C \hskip 0.5em (\geq |U|).
\end{equation*}
We first observe that there is no edge of $B_i=B_i(W_i,V^*_i)$ between $U_k$ and $V^*_i\setminus N_{B_i}(U_k)$. Hence, for $\cL(U_k)$ defined as in Case 1, it follows from~\eqref{eq:two bipartite}
that there is no edge of $B(\cL_i, V^*_i)$ between
$\cL(U_k)$ and $V^*_i\setminus N_{B_i}(U_k)$. This means that the induced subgraph $B\Big(\cL(U_k),V^*_i\setminus N_{B_i}(U_k)\Big)$ of $B(\cL_i, V^*_i)$ has no edge. 
Since $|\cL(U_k)|=|U_k|> \frac{\log n}{C^{k-1}}\big(\frac{n}{\log n}\big)^{k/2}$, the property (P2) yields that $$|V^*_i\setminus N_{B_i}(U_k)|<\frac{\log n}{C^{k-1}}\Big(\frac{n}{\log n}\Big)^{k/2}\leq \frac{n}{C},$$
which is equivalent to 
$|N_{B_i}(U_k)|\geq |V^*_i|-n/C$ as desired.
\end{proof}

\begin{corollary}\label{lem:matching B_i} For $i=1,..., 5$, there exists a $W_i$-matching in $B_i(W_i,V_i^*)$.
\end{corollary}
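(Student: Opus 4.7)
The plan is to deduce the corollary directly from Hall's marriage theorem, using Lemma~\ref{clm:matching} as the main input. Specifically, Lemma~\ref{clm:matching} already verifies Hall's condition $|N_{B_i}(U)| \geq |U|$ for \emph{every} $U \subseteq W_i$ satisfying the size restriction $|U| \leq |V_i^*| - n/C$. So the only thing left to establish is that this size restriction is automatic for subsets of $W_i$, i.e., that
\begin{equation*}
|W_i| \leq |V_i^*| - n/C \qquad (i = 1,\dots,5).
\end{equation*}

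To check this inequality, I would compute
\begin{equation*}
|V_i^*| - |W_i| \;=\; \big|V_0 \cup V_1 \cup \cdots \cup V_i\big| \,-\, \big|W_0 \cup W_1 \cup \cdots \cup W_i\big|,
\end{equation*}
since $|V_i^*| = |V_0 \cup \cdots \cup V_i| - |W_0 \cup \cdots \cup W_{i-1}|$ by construction. Using $|V_0| = (1-6\eps)n$ and $|V_j| = \eps n$ for $j\geq 1$, the first term is $(1-(6-i)\eps)n$. For the second term, note that $|W_0 \cup \cdots \cup W_i| = n - (|W_{i+1}| + \cdots + |W_6|)$; each $|W_j| \geq 2\eps n$ for $j = 1,\dots,5$ and $|W_6| = 2\eps n$ by Lemma~\ref{clm:partition of X}, so $|W_{i+1}| + \cdots + |W_6| \geq 2(6-i)\eps n$. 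Combining,
\begin{equation*}
|V_i^*| - |W_i| \;\geq\; (1-(6-i)\eps)n - n + 2(6-i)\eps n \;=\; (6-i)\eps n \;\geq\; \eps n,
\end{equation*}
which exceeds $n/C$ once $C$ is chosen larger than $1/\eps = 1000$.

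Consequently, for every $U \subseteq W_i$ one has $|U| \leq |W_i| \leq |V_i^*| - n/C$, and Lemma~\ref{clm:matching} gives $|N_{B_i}(U)| \geq |U|$. Hall's marriage theorem applied to $B_i(W_i, V_i^*)$ then yields a $W_i$-matching, completing the proof. There is no real obstacle here — the corollary is essentially a bookkeeping check that the partition sizes chosen in Lemma~\ref{clm:partition of X} leave enough unused room in $V_0 \cup \cdots \cup V_i$ for Lemma~\ref{clm:matching} to cover all of $W_i$; the content of the corollary lies entirely in Lemma~\ref{clm:matching}.
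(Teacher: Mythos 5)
Your proof is correct and takes essentially the same approach as the paper: both verify $|W_i| \leq |V_i^*| - n/C$ by the same size computation (arriving at $|V_i^*| - |W_i| \geq (6-i)\epsilon n \geq \epsilon n$) and then apply Hall's theorem via Lemma~\ref{clm:matching}. The only cosmetic difference is that the paper first rewrites $|V_i^*|$ as $|W_i\cup\cdots\cup W_6| - |V_{i+1}\cup\cdots\cup V_6|$ before subtracting $|W_i|$, whereas you substitute the part sizes directly.
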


\begin{proof} One can easily show that $|W_i|<|V_i^*|-n/C$ for $i=1,..., 5$. Indeed, we have
\begin{eqnarray*}
|V^*_i| &=& |V_0\cup\cdots\cup V_i|-|W_0\cup\cdots \cup W_{i-1}| \\
&=& |W_i\cup\cdots\cup W_6|-|V_{i+1}\cup \cdots \cup V_6|,
\end{eqnarray*}
and
\begin{eqnarray*}
|V^*_i|-|W_i| 
&=& |W_{i+1}\cup\cdots\cup W_6|-|V_{i+1}\cup \cdots \cup V_6| \\
&\geq& (6-i)2\epsilon n -(6-i)\epsilon n =(6-i)\epsilon n 
\geq \epsilon n > \frac{n}{C}
\end{eqnarray*}
where the first inequality follows from~\eqref{eq:size of W_i} and the last inequality holds for a sufficiently large constant $C$. 
Clearly, for all $U\subset W_i$, we have
$|U|< |V^*_i|-n/C$ for $1\leq i\leq 5$.
Hence, Lemma~\ref{clm:matching} yields that for every $U\subset W_i$, we have $|N_{B_i}(U)|\geq |U|$. Consequently, Hall's theorem implies Corollary~\ref{lem:matching B_i}.
\end{proof}

Next, we consider the case when $i=6$.

\begin{lemma}
There exists a $W_6$-matching in $B_6=B_6(W_6,V^*_6)$.
\end{lemma}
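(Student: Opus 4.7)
The plan is to verify Hall's condition for $B_6$: show $|N_{B_6}(U)|\geq |U|$ for every $U\subseteq W_6$. Since $|W_6|=|V_6^*|=2\epsilon n$, Lemma~\ref{clm:matching} only covers the range $|U|\leq |V_6^*|-n/C=2\epsilon n-n/C$, and the main obstacle is the saturation regime $|U|>2\epsilon n-n/C$, in which one must essentially show that $N_{B_6}(U)$ misses at most $n/C$ vertices of $V_6^*$. Property~(P1) is tailor-made for this.

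For $|U|\leq 2\epsilon n-n/C$, Lemma~\ref{clm:matching} applies verbatim. For the remaining range, the key structural observation is that $\cL_6$ is exactly the matching $\cM$ from~(P1): every $w\in W_6$ has $N_H(w)\subseteq W_0$ by Lemma~\ref{clm:partition of X}, so $L_6(w)=f_0(N_H(w))$, and by the construction of $f_0$ the map $w\mapsto L_6(w)$ is a bijection from $W_6$ onto $\cM$. In particular, setting $\cM(U):=\{L_6(w):w\in U\}$, we have $|\cM(U)|=|U|$. Moreover $V(\cM)=f_0(W_0)\subseteq {\rm Image}(f_5)$, so $V_6^*$ is disjoint from $V(\cM)$ and every subset of $V_6^*$ is eligible for~(P1).

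Assume $|U|>2\epsilon n-n/C$ and, for contradiction, $|N_{B_6}(U)|<|U|$. Set $V':=V_6^*\setminus N_{B_6}(U)$, so $|V'|>2\epsilon n-|U|$. For each $v\in V'$, the equivalence~\eqref{eq:two bipartite} together with $v\notin N_{B_6}(U)$ yields $\cM_v\cap\cM(U)=\emptyset$, where $\cM_v:=\{e\in\cM:e\subseteq N_G(v)\}$; therefore
\[
\Big|\bigcup_{v\in V'}\cM_v\Big|\leq |\cM|-|\cM(U)|=2\epsilon n-|U|<|V'|.
\]

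To finish, I would invoke~(P1) and split on $|V'|$. If $|V'|\leq \delta n/(C^2\log n)$, then~(P1) applied to $V'$ yields $|\bigcup_{v\in V'}\cM_v|\geq \frac{C^2\log n}{16n}\cdot 2\epsilon n\cdot |V'|=\frac{\epsilon C^2\log n}{8}|V'|$, contradicting the displayed upper bound $<|V'|$ once $\epsilon C^2\log n>8$. If instead $|V'|>\delta n/(C^2\log n)$, I would pass to a subset $V''\subseteq V'$ of size $\lceil \delta n/(C^2\log n)\rceil$, apply~(P1) to $V''$ to get $|\bigcup_{v\in V''}\cM_v|\geq \epsilon\delta n/8$, combine with $|\bigcup_{v\in V''}\cM_v|\leq 2\epsilon n-|U|$, and infer $|U|\leq 2\epsilon n-\epsilon\delta n/8$, contradicting $|U|>2\epsilon n-n/C$ once $C>8/(\epsilon\delta)$. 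Either subcase produces the desired contradiction, so Hall's condition holds and the $W_6$-matching exists.
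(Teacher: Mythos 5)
Your proof is correct and follows essentially the same route as the paper: verify Hall's condition, dispose of $|U|\leq|V_6^*|-n/C$ via Lemma~\ref{clm:matching}, and in the saturation regime invoke~(P1), splitting on whether $|V_6^*\setminus N_{B_6}(U)|$ is below or above the threshold $\delta n/(C^2\log n)$. The only cosmetic difference is that you phrase the counting in $\cM$ via the explicit bijection $w\mapsto L_6(w)$ from $W_6$ onto $\cM$, whereas the paper counts the disjoint sets $U$ and $N_{B_6}(V_6^*\setminus N_{B_6}(U))$ inside $W_6$ directly; these are the same observation transported across that bijection.
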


\begin{proof} It suffices to check Hall's condition, that is, for every $U\subset W_6$,
\begin{equation}\label{eq:Hall}
|N_{B_6}(U)|\geq |U|.
\end{equation}
If $|U|\leq |V^*_6|-n/C = 2\epsilon n-n/C$, then Lemma~\ref{clm:matching} implies~\eqref{eq:Hall}. Hence, we assume that 
\begin{equation*}|U|> 2\epsilon n-n/C. \end{equation*}

Notice that $$|U|+\Big|N_{B_6}\Big(V^*_6\setminus N_{B_6}(U)\Big)\Big|\leq |W_6| = 2\epsilon n$$
since $U$ and $N_{B_6}\Big(V^*_6\setminus N_{B_6}(U)\Big)$ are disjoint. If $|V^*_6\setminus N_{B_6}(U)|\geq \frac{\delta n}{C^2 \log n}$, take a subset $Y$ of $V^*_6\setminus N_{B_6}(U)$ with $|Y|=\frac{\delta n}{C^2 \log n}$. Then, by equation~\eqref{eq:two bipartite} and Property (P1), we infer
\begin{equation*}
\Big|N_{B_6}\Big(V^*_6\setminus N_{B_6}(U)\Big)\Big|\geq |N_{B_6}(Y)| \geq \frac{\epsilon \delta}{8}n>\frac{n}{C}
\end{equation*}
and $|U|+\Big|N_{B_6}\Big(V^*_6\setminus N_{B_6}(U)\Big)\Big|>2\epsilon n,$ which is a contradiction. Therefore, $|V^*_6\setminus N_{B_6}(U)|< \frac{\delta n}{C^2 \log n}$. Then, Property (P1) together with~\eqref{eq:two bipartite} implies that 
$$
\Big|N_{B_6}\Big(V^*_6\setminus N_{B_6}(U)\Big)\Big|\geq \frac{C^2\epsilon \log n}{8}\big|V^*_6\setminus N_{B_6}(U)\big| > \big|V^*_6\setminus N_{B_6}(U)\big|.
$$
Since $|N_{B_6}(U)|+|V^*_6\setminus N_{B_6}(U)|=|V^*_6|=2\epsilon n $ and $|U|+\Big|N_{B_6}\Big(V^*_6\setminus N_{B_6}(U)\Big)\Big|\leq |W_6|=2\epsilon n$, we have
\begin{eqnarray*}
|N_{B_6}(U)|+|V^*_6\setminus N_{B_6}(U)| &\geq&  |U|+\Big|N_{B_6}\Big(V^*_6\setminus N_{B_6}(U)\Big)\Big| \\
&> & |U|+ \big|V^*_6\setminus N_{B_6}(U)\big|,
\end{eqnarray*}
that is, $|N_{B_6}(U)|>|U|$.
\end{proof}

\section{Random graph is good.}\label{sec:good}

In order to show Lemma~\ref{lem:good}, we need to show that there exists a positive constant $C$ such that the random graph $G(n,p)$ with $p=C\big(\frac{\log n}{n}\big)^{1/2}$ a.a.s. satisfies Properties (P1) and (P2) in Definition~\ref{def:good}. Our proof of Properties (P1) and (P2) of $G(n,p)$ will be given in Sections~\ref{sec:P1} and~\ref{sec:P2}, respectively.
In the proofs, we will use the following version of Chernoff's bound.

\begin{lemma}[\textbf{Chernoff's bound}, Corollary 4.6 in~\cite{MU2005}] \label{lem:Chernoff} Let $X_i$ be independent random variables such that $\Pr[X_i=1]=p_i$ and $\Pr[X_i=0]=1-p_i$, and let $X=\sum_{i=1}^{n} X_i$.  
For $0<\lambda<1$,
\begin{equation*}\Pr \Big[|X-\EE(X)|\geq \lambda\EE(X)\Big]\leq 2 \exp\Big(-\frac{\lambda^2}{3}\EE(X)\Big). \end{equation*}
\end{lemma}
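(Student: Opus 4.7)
The plan is to prove this two-sided tail bound by the standard Chernoff–Bernstein argument: exponentiate, apply Markov, bound the moment generating function using independence, and optimize the free parameter. First, I would treat the upper tail $\Pr[X - \EE(X) \geq \lambda \EE(X)]$. For any $t > 0$, Markov's inequality applied to $e^{tX}$ gives
\begin{equation*}
\Pr\bigl[X \geq (1+\lambda)\mu\bigr] \leq \frac{\EE[e^{tX}]}{e^{t(1+\lambda)\mu}},
\end{equation*}
where $\mu := \EE(X) = \sum_i p_i$. By independence of the $X_i$, $\EE[e^{tX}] = \prod_i \EE[e^{tX_i}]$. A direct computation gives $\EE[e^{tX_i}] = 1 + p_i(e^t - 1)$, and using the elementary inequality $1 + x \leq e^x$ yields $\EE[e^{tX_i}] \leq \exp\bigl(p_i(e^t - 1)\bigr)$. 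Multiplying over $i$ gives $\EE[e^{tX}] \leq \exp\bigl(\mu(e^t - 1)\bigr)$.

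Next, I would optimize by choosing $t = \ln(1+\lambda)$, which produces the classical bound
\begin{equation*}
\Pr\bigl[X \geq (1+\lambda)\mu\bigr] \leq \left(\frac{e^\lambda}{(1+\lambda)^{1+\lambda}}\right)^{\!\mu}.
\end{equation*}
To convert this into the promised $\exp(-\lambda^2 \mu/3)$ form, I would verify the scalar inequality $\lambda - (1+\lambda)\ln(1+\lambda) \leq -\lambda^2/3$ on $0 < \lambda < 1$; this is the one slightly delicate step and is most cleanly handled by showing the function $g(\lambda) := (1+\lambda)\ln(1+\lambda) - \lambda - \lambda^2/3$ is nonnegative, via $g(0) = 0$ together with $g'(\lambda) = \ln(1+\lambda) - 2\lambda/3 \geq 0$ for $\lambda \in (0,1)$ (using the Taylor expansion of $\ln(1+\lambda)$ or checking $g'(0) = 0$ and $g''(\lambda) = \frac{1}{1+\lambda} - \frac{2}{3} \geq 0$ on this range).

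For the lower tail $\Pr[X \leq (1-\lambda)\mu]$, I would run the symmetric argument with $t < 0$ (equivalently, apply Markov to $e^{-tX}$ with $t > 0$ and optimize at $t = -\ln(1-\lambda)$), obtaining
\begin{equation*}
\Pr\bigl[X \leq (1-\lambda)\mu\bigr] \leq \left(\frac{e^{-\lambda}}{(1-\lambda)^{1-\lambda}}\right)^{\!\mu} \leq \exp\!\left(-\frac{\lambda^2 \mu}{2}\right),
\end{equation*}
where the second inequality comes from the analogous scalar check $-\lambda - (1-\lambda)\ln(1-\lambda) \leq -\lambda^2/2$. Since $\lambda^2/2 \geq \lambda^2/3$, both tails are dominated by $\exp(-\lambda^2 \mu/3)$, and a union bound produces the factor of $2$. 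The only real obstacle is the scalar inequality in the upper-tail reduction; aside from that, the proof is mechanical from Markov plus independence.
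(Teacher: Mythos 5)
Your overall strategy is exactly the standard one: the paper does not prove this lemma at all but simply cites it as Corollary 4.6 of Mitzenmacher--Upfal, and your exponentiate--Markov--optimize argument, with the two scalar inequalities and a union bound giving the factor $2$, is precisely the textbook derivation. The lower-tail half and the union bound step are fine as written.

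The one place you need to be careful is your justification of the upper-tail scalar inequality, i.e.\ of $g'(\lambda)=\ln(1+\lambda)-\tfrac{2\lambda}{3}\geq 0$ on $(0,1)$. The check you suggest, ``$g''(\lambda)=\tfrac{1}{1+\lambda}-\tfrac{2}{3}\geq 0$ on this range,'' is false for $\lambda>\tfrac12$, and the usual Taylor bound $\ln(1+\lambda)\geq\lambda-\tfrac{\lambda^2}{2}$ only gives $g'\geq\tfrac{\lambda}{3}-\tfrac{\lambda^2}{2}\geq 0$ for $\lambda\leq\tfrac23$, so neither of your two proposed one-liners covers all of $(0,1)$. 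The claim itself is true and the repair is immediate: the function $h(\lambda)=\ln(1+\lambda)-\tfrac{2\lambda}{3}$ is concave, $h(0)=0$, and $h(1)=\ln 2-\tfrac23>0$, so $h\geq 0$ on all of $[0,1]$ (equivalently, $h$ increases up to $\lambda=\tfrac12$ and then decreases to the positive value $h(1)$). With that substitution your proof is complete and matches the cited source's argument.
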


\subsection{Property~(P1)}\label{sec:P1} 
In order to show that $G(n,p)$ with $p=C\big(\frac{\log n}{n}\big)^{1/2}$ a.a.s. satisfies Property~(P1), it suffices to show the following lemma.

 \begin{lemma}\label{lem:P1} There exists a positive constant $C$ such that $G(n,p)$ with $p=C\big(\frac{\log n}{n}\big)^{1/2}$ a.a.s. satisfies the following:
 There exists  a matching $\cM$ with $|\cM|=2\epsilon n$ in the subgraph of $G(n,p)$ induced on $V_0$  such that for all $U\subset V\setminus V(\cM)$ with $ |U|\leq  \frac{\delta n}{C^2\log n}$, 
we have $$\Big|\Big\{ \{a,b\}\in \cM \; \big| \; a\sim u, b\sim u \mbox{ for some } u\in U\Big\}\Big|\geq \frac{C^2\log n}{16n} |\cM| |U|.$$

\end{lemma}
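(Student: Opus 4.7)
The plan is to use a two-round (``sprinkling'') exposure so that the matching $\cM$ and the edges witnessing the co-degree condition come from \emph{independent} layers of randomness. Writing $p = 1-(1-p_1)(1-p_2)$ with $p_1 = p_2$ (so each $p_i = \Theta(\sqrt{\log n/n})$), I will couple $G(n,p)$ as $G_1 \cup G_2$ with $G_i = G(n,p_i)$ independent. The matching will be located using only $G_1$, and the co-degree count verified inside $G_2\subset G(n,p)$.

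First, since $|V_0|\geq (3/4)n$ and $p_1|V_0|\gg \log n$, standard random-graph estimates (the independence number of $G(|V_0|,p_1)$ is $o(n)$, so any maximal matching covers all but $o(n)$ vertices) guarantee that $G_1[V_0]$ a.a.s.\ contains a matching of size at least $2\epsilon n$; I fix such an $\cM$ of size exactly $2\epsilon n$. Crucially, $\cM$ is a measurable function of $G_1$ alone, hence independent of $G_2$.

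Next, I condition on $\cM$ and work inside $G_2$. For each $U\subset V\setminus V(\cM)$, set
\[
X_U := \Big|\{\{a,b\}\in\cM : \exists\, u\in U,\ u\sim_{G_2} a \text{ and } u\sim_{G_2} b\}\Big|.
\]
Because $\cM$ is a matching and $U\cap V(\cM)=\emptyset$, the indicators $\mathbf 1[\exists u\in U: u\sim_{G_2} a,\, u\sim_{G_2} b]$ for distinct $\{a,b\}\in\cM$ depend on disjoint families of $G_2$-edges, and are therefore mutually independent. Each has probability $1-(1-p_2^2)^{|U|}\geq (1-\delta/2)|U|p_2^2$, using $|U|p_2^2\leq |U|p^2\leq \delta$ for $|U|\leq \delta n/(C^2\log n)$. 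Hence $\EE[X_U]\geq \frac{1-\delta/2}{4}|\cM|\,|U|\,p^2 = \Theta(C^2|U|\log n)$. Lemma~\ref{lem:Chernoff} with $\lambda=1/2$ then yields $\Pr[X_U<\tfrac12\EE[X_U]]\leq 2\exp(-cC^2|U|\log n)$ for an absolute $c>0$. A union bound over the at most $\binom{n}{u}\leq n^u$ subsets $U$ with $|U|=u$, and then over $u\geq 1$, gives a total failure probability $\sum_{u\geq 1} 2\exp(u\log n\,(1-cC^2)) = o(1)$ once $C$ is taken so large that $cC^2>2$. The resulting lower bound $X_U\geq \frac{1-\delta/2}{8}|\cM|\,|U|\,p^2$ comfortably exceeds the required $\frac{C^2\log n}{16n}|\cM|\,|U| = \frac{1}{16}|\cM|\,|U|\,p^2$, since $(1-\delta/2)/8 > 1/16$.

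The main obstacle is precisely the correlation issue that motivates sprinkling: if $\cM$ were extracted from the full graph $G(n,p)[V_0]$, then for $U\subset V_0\setminus V(\cM)$ the $G(n,p)$-edges from $U$ to $V(\cM)$ would also lie inside $V_0$ and would be correlated with the choice of $\cM$, destroying the independence needed for Chernoff. Splitting the randomness into two independent rounds removes this coupling cleanly. The remaining work---estimating $\EE[X_U]$ and checking that the constant $(1-\delta/2)/8$ beats $1/16$---is routine bookkeeping.
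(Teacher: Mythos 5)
Your proposal is correct and follows essentially the same two-round (sprinkling) argument as the paper: locate $\cM$ using only the first exposure, then apply Chernoff and a union bound over $U$ in the second exposure, exploiting that the co-degree indicators for distinct matching edges depend on disjoint edge sets once $U\cap V(\cM)=\emptyset$. The only cosmetic differences are that the paper takes a sparser first round ($G(n-6\epsilon n,\,2\log n/n)$) and invokes Erd\H{o}s--R\'enyi for a near-perfect matching, whereas you split evenly and justify the matching via an independence-number bound; the bookkeeping constants also differ slightly but both comfortably meet the required threshold.
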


\begin{proof} Let $G_1=G\big(n-6\epsilon n, \frac{2\log n}{n}\big)$ on the vertex set $V_0$ and $G_2=G(n,p/2)$ on the vertex set $V$. It is easy to see that $G(n,p)$ on $V$ stochastically contains $G_1\cup G_2$. Hence, it is enough to show that $G_1\cup G_2$ a.a.s. contains a matching $\cM$ described in Lemma~\ref{lem:P1}.

 The result of Erd\H{o}s and R\'enyi~\cite{Erdos1966} implies that $G_1$ a.a.s. contains a  matching in $V_0$ covering all but at most one vertex. Hence, $G_1$ on $V_0$ a.a.s. contains a matching of size $2\epsilon n$. Take such a matching $\cM$ in $G_1$.
 
 Let
  \begin{eqnarray*}X(U):=\Big|\Big\{ e\in \cM \; \big| \; e\subset N_{_{G_2}}(u) \mbox{ for some } u\in U\Big\}\Big|.\end{eqnarray*}
 Notice that $X(U)$ is the sum of independent and identically distributed (i.i.d.) random variables $X_e$, $e\in \cM$, where
$$\displaystyle X_e=\left\{ \begin{matrix} \hskip 0.4em 1 \hskip 1em \mbox{ if $e\subset N_{_{G_2}}(u) \mbox{ for some } u\in U$} \\  \phantom{} \hskip -8em 0  \hskip 1em \mbox{ otherwise.} \end{matrix} \right.$$

Since $|U|\leq \frac{\delta n}{C^2\log n}$, we have that for each $e\in \cM$,
\begin{eqnarray*}
\Pr\Big[X_e=0\Big]&=& \Big(1-\Big(\frac{p}{2}\Big)^2\Big)^{|U|}\leq 1-\frac{|U|p^2}{4}+\frac{|U|^2p^4}{32} \\
&\leq & 1-\Big(1-\frac{\delta}{8}\Big)\frac{|U|p^2}{4} \leq 1-\frac{|U|p^2}{8},
\end{eqnarray*}
or equivalently, 
$\Pr\big[X_e=1\big]\geq\frac{1}{8}|U|p^2.$ Hence, $$\EE\big(X(U)\big)\geq \frac{p^2}{8}|\cM||U|=\frac{C^2\log n}{8n}|\cM||U|.$$
Chernoff's bound (Lemma~\ref{lem:Chernoff})  yields that
\begin{eqnarray*}
\Pr\Big[X(U)<\frac{C^2\log n}{16n}|\cM||U|\Big]&\leq & \Pr\Big[|X(U)-\EE\big(X(U)\big)|\geq \frac{1}{2}\EE\big(X(U)\big)\Big] \\
&\leq & 2\exp\Big(-0.01p^2|\cM||U|\Big) \\
&=& 2\exp\Big(-0.01\cdot 2\epsilon C^2 |U|\log n \Big) \\
&\leq & 2\exp\big(-3|U|\log n\big)\leq \frac{2}{n^{3|U|}}.
\end{eqnarray*}
Therefore, we infer
\begin{eqnarray*}
&&\hskip -5em \Pr\Big[\exists\; U\in V\setminus V(\cM) \mbox{ with } |U|\leq \frac{\delta n}{C^2\log n} \mbox{ such that } X(U)<\frac{C^2\log n}{16n}|\cM||U|\Big] \\
&\leq & \sum_{\ell=1}^{n}n^{\ell}\frac{2}{n^{3\ell}}\leq n\cdot \frac{2}{n^2}=o(1),
\end{eqnarray*}
which completes the proof of~Lemma~\ref{lem:P1}.
\end{proof}

\subsection{Property~(P2)}\label{sec:P2} We now  show that $G(n,p)$ with $p=C\big(\frac{\log n}{n}\big)^{1/2}$  a.a.s. satisfies Property~(P2). First, recall the following definition which was given just before Definition~\ref{def:good}: For a graph $G$ on $V$ and $k=1 \mbox{ or }2$, let $U\subset V$ and $\cL$ be a collection of pairwise disjoint $k$-subsets of $V\setminus U$. We consider a bipartite graph $B(\cL, U)$ between $\cL$ and $U$, in which $L\in\cL$ and $u\in U$ are adjacent if and only if $L\subset N_G(u)$.

In order to show that $G(n,p)$ with $p=C\big(\frac{\log n}{n}\big)^{1/2}$ a.a.s. satisfies Property~(P2), it suffices to show the following lemma.

\begin{lemma}\label{lem:P2}
There exists a positive constant $C$ such that $G(n,p)$ with $p=C\big(\frac{\log n}{n}\big)^{1/2}$ a.a.s. satisfies the following: Let $k=1 \mbox{ or } 2$, and $\cL$ be a collection of pairwise disjoint $k$-subsets of $V$. 

\begin{enumerate}[$(a)$]
\item\label{item:P2_small} 
 If $\displaystyle |\cL|\leq \frac{\delta}{C^k}\Big(\frac{n}{\log n}\Big)^{k/2}$, then, for $V_i$ with $V_i\cap\Big(\bigcup_{L\in \cL}L\Big)=\emptyset$, $i=1,..., 6$, we have that
\begin{equation*}
|N_{_{\! B(\cL,V_i)}}(\cL)|\geq (1-\delta)C^k\Big(\frac{\log n}{n}\Big)^{k/2}|\cL| |V_i|.
\end{equation*}

\item\label{item:P2_large}
 If $\displaystyle |\cL|\geq \frac{\log n}{C^{k-1}}\Big(\frac{n}{\log n}\Big)^{k/2}$, then, for all $U$ with $\displaystyle |U|\geq \frac{\log n}{C^{k-1}}\Big(\frac{n}{\log n}\Big)^{k/2}$ and $U\cap\Big(\bigcup_{L\in \cL}L\Big)=\emptyset$, 
the graph $B(\cL,U)$ has at least one edge.
\end{enumerate}
 
 \end{lemma}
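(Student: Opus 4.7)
The plan is to verify parts $(a)$ and $(b)$ of Lemma~\ref{lem:P2} separately, each by computing the relevant probability for one fixed configuration and then union-bounding over all configurations.

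For part $(a)$, fix $k \in \{1,2\}$, a family $\cL$ of $m$ pairwise disjoint $k$-subsets of $V$, and an index $i$ with $V_i \cap \bigcup_{L \in \cL} L = \emptyset$, and set $Y := |N_{B(\cL,V_i)}(\cL)| = \sum_{v \in V_i} Y_v$, where $Y_v = \mathbf{1}[\exists L \in \cL : L \subset N_G(v)]$. Since the $L \in \cL$ are pairwise disjoint and all disjoint from $V_i$, for a fixed $v$ the events ``$L \subset N_G(v)$'' for distinct $L$ involve disjoint edge slots, yielding $\Pr[Y_v = 1] = 1 - (1-p^k)^m$; and different $v \in V_i$ use disjoint edges, so the $Y_v$ are mutually independent. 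The hypothesis $m \le (\delta/C^k)(n/\log n)^{k/2}$ gives $mp^k \le \delta$, so
\begin{equation*}
1 - (1-p^k)^m \ge mp^k - \binom{m}{2}p^{2k} \ge (1-\delta/2)\,mp^k,
\end{equation*}
and hence $\EE[Y] \ge (1-\delta/2)\,p^k m |V_i|$. Applying Chernoff's bound (Lemma~\ref{lem:Chernoff}) with deviation $\lambda$ of order $\delta$, and using $p^k m |V_i| \ge \epsilon C^k m (\log n)^{k/2} n^{1-k/2} \ge \epsilon C^k m \log n$ (valid for both $k=1,2$), gives
\begin{equation*}
\Pr\big[Y < (1-\delta)\,p^k m |V_i|\big] \le 2\exp(-c\,\delta^2\,\epsilon\,C^2 m \log n)
\end{equation*}
for an absolute constant $c > 0$. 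Union-bounding over the at most $6\,n^{km}$ admissible pairs $(\cL, V_i)$ at each size $m$ and then over $m$ yields total failure probability $o(1)$ provided $c\,\delta^2 \epsilon\, C^2 > k + 2$.

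For part $(b)$, fix $k$, $\cL$ with $|\cL| = m \ge T := (\log n/C^{k-1})(n/\log n)^{k/2}$, and $U$ with $|U| = u \ge T$ and $U \cap \bigcup_{L \in \cL} L = \emptyset$. As in $(a)$, the events $\{L \subset N_G(v)\}_{L \in \cL,\, v \in U}$ depend on pairwise disjoint edges, so they are mutually independent, giving
\begin{equation*}
\Pr[B(\cL,U)\text{ has no edge}] = (1-p^k)^{mu} \le \exp(-p^k mu).
\end{equation*}
A brief calculation shows $p^k T = C \log n$, so $p^k m u \ge C(\log n)\max(m,u)$, which for $C$ sufficiently large exceeds the union-bound entropy $km \log n + u\log(en/u)$ by a factor bounded away from $1$. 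Summing over $T \le m, u \le n$ gives failure probability $o(1)$.

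The main obstacle is the concentration step in part $(a)$ when $k = 2$ and $m$ is small: then $\EE[Y] = \Theta(C^2 m \log n)$ is only logarithmic in $n$, the Chernoff tail is merely $n^{-\Theta(C^2 m)}$, and it must beat the $n^{2m}$ entropy of the choice of $\cL$. This forces $C$ to be a large absolute constant depending on $\delta$ and $\epsilon$; once such $C$ is fixed, all other regimes follow from the same estimates with room to spare.
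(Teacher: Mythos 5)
Your proof is correct and follows essentially the same strategy as the paper: represent $|N_{B(\cL,V_i)}(\cL)|$ (resp.\ the edge count of $B(\cL,U)$) as a sum of independent indicators, bound the expectation using $p^k|\cL|\le\delta$, apply Chernoff, and take a union bound with entropy $n^{km}$ (resp.\ $n^{km}\binom{n}{u}$). The one small deviation is that in part $(b)$ you use the exact identity $\Pr[\text{no edge}]=(1-p^k)^{mu}\le e^{-p^k mu}$ rather than invoking Chernoff, which is slightly cleaner but does not change the argument in substance.
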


\begin{proof} For a proof of~\eqref{item:P2_small} of Lemma~\ref{lem:P2}, we observe that $X(\cL,V_i):=|N_{_{B(\cL,V_i)}}(\cL)|$ is the sum of i.i.d. random variables $X_v$, $v\in V_i$,  where $$\displaystyle X_v=\left\{ \begin{matrix} \hskip 0.4em 1 \hskip 1em \mbox{ if  $L \subset N_G(v)$  for some $L\in \cL$} \\  \phantom{} \hskip -7.8em 0  \hskip 1em \mbox{ otherwise.} \end{matrix} \right. $$
Since $|L|=k$ for all $L\in \cL$ and $p^k|\cL|\leq \delta=0.01$, we have
 \begin{equation*}\label{eq:E(X)(2)}\EE\Big(X(\cL,V_i)\Big)=|V_i|\Big(1-(1-p^k)^{|\cL|}\Big)\geq \(1-\delta/2\) p^k{|\cL|}|V_i|=\(1-\delta/2\) C^k\Big(\frac{\log n}{n}\Big)^{k/2}|\cL||V_i|.\end{equation*} 
Chernoff's bound (Lemma~\ref{lem:Chernoff}) implies that
\begin{eqnarray*}\Pr\Big[X(\cL,V_i)< (1- \delta)C^k\Big(\frac{\log n}{n}\Big)^{k/2}|\cL||V_i| \Big] &\leq &  \Pr\Big[|X(\cL,V_i)-\EE\Big(X(\cL,V_i)\Big)| \geq  \frac{\delta}{2} \EE\Big(X(\cL,V_i)\Big)\Big] \\ &\leq & 2 \exp\Big(-\frac{\delta^2}{12}\EE\Big(X(\cL,V_i)\Big)\Big),\end{eqnarray*}
and by $p^k|V_i|\geq p^2\epsilon n=C^2\epsilon \log n$,
\begin{eqnarray*}
\Pr\Big[X(\cL,V_i)< (1- \delta)C^k\Big(\frac{\log n}{n}\Big)^{k/2}{|\cL|}|V_i| \Big] \leq 2\exp\big( -3{|\cL|}\log n\big) 
=  \frac{2}{n^{3|\cL|}}.
\end{eqnarray*}
Therefore, we infer that
\begin{eqnarray*}
&& \hskip -5em \Pr\Big[\exists\; V_i, \cL \mbox{ with $1\leq {|\cL|}\leq \frac{\delta}{C^k}\big(\frac{n}{\log n}\big)^{k/2}$ such that } X(\cL,V_i)< (1- \delta)C^k\Big(\frac{\log n}{n}\Big)^{k/2}{|\cL|}|V_i| \Big] \\ &\leq& 6  
\sum_{\ell=1}^{n}n^{\ell} \frac{2}{n^{3\ell}}  \leq  6n\frac{2}{n^2}=o(1),
\end{eqnarray*}
which completes the proof of~\eqref{item:P2_small} of Lemma~\ref{lem:P2}.

For a proof of~\eqref{item:P2_large} of Lemma~\ref{lem:P2},
we observe that the number $Y(\cL,U)$ of edges in $B(\cL,U)$ is the sum of i.i.d. random variables $Y_{L,u}$ for $L\in \cL$ and $u\in U$, where
$$\displaystyle Y_{L,u}=\left\{ \begin{matrix} \hskip 0.4em 1 \hskip 1em \mbox{ if  $L \subset N_G(u)$} \\  \phantom{} \hskip -0.8em 0  \hskip 1em \mbox{ otherwise.} \end{matrix} \right. $$
Since $|L|=k$ for all $L\in \cL$, we have
 $\EE\Big(Y(\cL,U)\Big)=p^k{|\cL|}{|U|}.$
    Chernoff's bound (Lemma~\ref{lem:Chernoff})  yields that 
\begin{eqnarray*}
\Pr\Big[Y(\cL,U)=0\Big]
&\leq &\Pr\Big[\big|Y(\cL,U)-\EE\big(Y(\cL,U)\big)\big|\geq \frac{1}{2} \EE\big(Y(\cL,U)\big)\Big] \\ &\leq &2\exp\Big(-\frac{1}{12}\EE\big(Y(\cL,U)\big)\Big) 
\leq  2\exp\Big(-\frac{1}{12}p^k{|\cL|}{|U|}\Big).
\end{eqnarray*}
For $\ell\geq \frac{\log n}{C^{k-1}}\big(\frac{n}{\log n}\big)^{k/2}$ and $r\geq \frac{\log n}{C^{k-1}}\big(\frac{n}{\log n}\big)^{k/2}$, the number of $\cL$ with $|\cL|=\ell$ is at most $\binom{n}{k}^{\ell} \leq n^{k\ell}$ and the number of $U$ with $|U|=r$ is at most $\binom{n}{r}$, and we have
 \begin{eqnarray*}
 && \hskip -5em \Pr\Big[\exists\; \cL, U \mbox{ with ${|\cL|=\ell}, {|U|=r}$ such that } Y(\cL,U)=0\Big] \\
&&\leq  n^{k{\ell}} n^{r} \cdot 2\exp\Big(-\frac{1}{12}p^k{\ell}{r}\Big) \leq     2\exp\Big(\big(k{\ell}+{r}\big)\log n-\frac{1}{12}p^k{\ell}{r}\Big).
 \end{eqnarray*}
Since $p^k \ell =C^k\big(\frac{\log n}{n}\big)^{k/2}\ell\geq C\log n$ and $p^kr=C^k\big(\frac{\log n}{n}\big)^{k/2}r\geq C\log n$, we have that
\begin{eqnarray*}
(k\ell+r)\log n \leq 0.01C(\ell+r)\log n \leq 0.01\big(p^k\ell r+p^k\ell r\big) \leq 0.02p^k{\ell}{r},
\end{eqnarray*}
and hence,
$$ n^{k{\ell}} n^{r} \cdot 2\exp\Big(-\frac{1}{12}p^k{\ell}{r}\Big)\leq 2\exp\Big(-\frac{1}{24}p^k{\ell}{r}\Big)\leq 2\exp\Big(-\mbox{$\frac{C^{2-k}}{24}\big(\frac{n}{\log n}\big)^{k/2}(\log n)^2$} \Big)\leq 2\exp\big(-n^{1/2}\big). $$
Therefore, we infer that
\begin{eqnarray*}
&& \hskip -5em \Pr\Big[\exists\; \cL, U \mbox{ with ${|\cL|}, {|U|}\geq \frac{\log n}{C^{k-1}}\big(\frac{n}{\log n}\big)^{k/2}$ such that } Y(\cL,U)=0\Big] \\
&\leq & \frac{n}{k}\cdot n \cdot 2\exp\big(-n^{1/2}\big)  =o(1),
\end{eqnarray*}
which completes the proof of~\eqref{item:P2_large} of Lemma~\ref{lem:P2}.
\end{proof}

\section{Concluding remarks}

One may ask about how the approach of this paper can
be used for the case that $d\geq 3$.
We believe that our approach for finding a suitable matching given in Lemma~\ref{lem:P1} can be also applied in order to find a suitable family of vertex disjoint $d$-cliques when $d\geq 3$ and $p\geq C\big(\frac{\log n}{n}\big)^{1/d}$.
 This approach together with an embedding algorithm modified from the algorithm in Dellamonica, Kohayakawa, \Rodl~and \Rucinski~\cite{Dellamonica13:universal, Dellamonica12:universal} may provide a  simpler proof of Theorem~\ref{thm:universality}. 

As a further research direction, we are interested in resolving the following problem.
\begin{problem}
For an integer $d\geq 2$, determine the largest constant $a=a(d)$ with $0\leq a\leq 1$ such that if $p\geq n^{-a+o(1)}$, then $G(n,p)$ is a.a.s. $\cF_n(d)$-universal.
\end{problem}

An easy observation mentioned in the introduction gives an upper bound $\frac{2}{d+1}$ for $a$. The current best lower bound is $\frac{1}{d}$  based on  the result in Dellamonica, Kohayakawa, \Rodl~and \Rucinski~\cite{Dellamonica13:universal, Dellamonica12:universal} and this paper.

\old{
Let $V_0$ be a vertex set of size at least $\frac{3}{4}n$.

 \begin{lemma}\label{lem:P1_d} For $d\geq 3$, there exists a positive constant $C=C(d)$ such that $G(n,p)$ with $p=C\big(\frac{\log n}{n}\big)^{1/d}$ a.a.s. satisfies the following:
 There exists  a family $\cK$ of disjoint $d$-cliques with $|\cK|=2\epsilon n$ in the subgraph of $G(n,p)$ induced on $V_0$  such that for all $U\subset V\setminus V(\cK)$ with $ |U|\leq  \frac{\delta n}{(4C)^d\log n}$, 
we have $$\Big|\Big\{ K\in \cK \; \big| \; K\subset N_{_{G(n,p)}}(u) \mbox{ for some } u\in U\Big\}\Big|\geq \frac{C^d\log n}{2\cdot 5^d n} |\cK| |U|.$$

\end{lemma}

\begin{proof} Let $G_1=G\big(\frac{3}{4}n, \frac{2\log n}{n}\big)$ on the subset of $V_0$ of size $\frac{3}{4}n$, and $G_2=G(n,p/2)$ on the vertex set $V$. It is easy to see that $G(n,p)$ on $V$ stochastically contains $G_1\cup G_2$. Hence, it is enough to show that $G_1\cup G_2$ a.a.s. contains a matching $\cK$ described in Lemma~\ref{lem:P1_d}.

 The result by Johansson, Kahn and Vu~\cite{Johansson2008}  implies that $G_1$ a.a.s. contains a $K_d$ factor covering all but at most $d-1$ vertices. Hence, $G_1$ a.a.s. contains a family of $2\epsilon n$ disjoint $d$-cliques. Take such a family $\cK$ in $G_1$.
 
 Let
  \begin{eqnarray*}X(U):=\Big|\Big\{ K\in \cK \; \big| \; K\subset N_{_{G_2}}(u) \mbox{ for some } u\in U\Big\}\Big|.\end{eqnarray*}
 Notice that $X(U)$ is the sum of i.i.d. random variables $X_K$, $K\in \cK$, where
$$\displaystyle X_K=\left\{ \begin{matrix} \hskip 0.4em 1 \hskip 1em \mbox{ if $K\subset N_{_{G_2}}(u) \mbox{ for some } u\in U$} \\  \phantom{} \hskip -8.4em 0  \hskip 1em \mbox{ otherwise.} \end{matrix} \right.$$

Since $|U|\leq \frac{\delta n}{(4C)^d\log n}$, we have that for each $K\in \cK$,
\begin{eqnarray*}
\Pr\Big[X_K=0\Big]&=& \Big(1-\Big(\frac{p}{2}\Big)^d\Big)^{|U|}\leq 1-\frac{|U|p^d}{2^d}+\frac{|U|^2p^{2d}}{2^{2d+1}}  \leq 1-\frac{|U|p^d}{2^{d+1}},
\end{eqnarray*}
or equivalently, 
$\Pr\big[X_e=1\big]\geq\frac{1}{2^{d+1}}|U|p^d.$ Hence, $$\EE\big(X(U)\big)\geq \frac{p^d}{2^{d+1}}|\cK||U|=\frac{C^d\log n}{2^{d+1}n}|\cK||U|.$$
Chernoff's bound (Lemma~\ref{lem:Chernoff})  yields that
\begin{eqnarray*}
\Pr\Big[X(U)<\frac{C^d\log n}{2^{d+2}n}|\cK||U|\Big]&\leq & \Pr\Big[|X(U)-\EE\big(X(U)\big)|\geq \frac{1}{2}\EE\big(X(U)\big)\Big] \\
&\leq & 2\exp\Big(-0.01\frac{C^d \log n}{2^{d+1} n}|\cK||U|\Big) \\
&=& 2\exp\Big(-\frac{0.01 \epsilon C^d }{2^{d} } |U|\log n \Big) \\
&\leq & 2\exp\big(-3|U|\log n\big)\leq \frac{2}{n^{3|U|}}.
\end{eqnarray*}
Therefore, we infer
\begin{eqnarray*}
&&\hskip -5em \Pr\Big[\exists\; U\in V\setminus V(\cK) \mbox{ with } |U|\leq \frac{\delta n}{(4C)^d\log n} \mbox{ such that } X(U)<\frac{C^d\log n}{2^{d+2}n}|\cK||U|\Big] \\
&\leq & \sum_{\ell=1}^{n}n^{\ell}\frac{2}{n^{3\ell}}\leq n\cdot \frac{2}{n^2}=o(1),
\end{eqnarray*}
which implies the proof of~Lemma~\ref{lem:P1_d}.
\end{proof}

}

\begin{acknowledge} 
 The second author thanks D. Dellamonica, Y. Kohayakawa, V. R\"odl, and A. Ruci\'nski 
 for their helpful comments.
\end{acknowledge}

\providecommand{\bysame}{\leavevmode\hbox to3em{\hrulefill}\thinspace}
\providecommand{\MR}{\relax\ifhmode\unskip\space\fi MR }
\providecommand{\MRhref}[2]{%
  \href{http://www.ams.org/mathscinet-getitem?mr=#1}{#2}
}
\providecommand{\href}[2]{#2}
\def\MR#1{\relax}

\end{document}